\newtheorem{theorem}{Theorem}
\theoremstyle{plain}
\newtheorem{case}{Case}
\newtheorem{corollary}{Corollary}
\newtheorem{example}{Example}
\newtheorem{proposition}{Proposition}
\numberwithin{equation}{section}
\begin{document}
\title[Conformal vector fields on doubly warped product manifolds]{Conformal
vector fields on doubly warped product manifolds and applications}

\author{H. K. El-Sayied}
\address{Mathematics Department, Faculty of Science, Tanata University, 31527 Tanta, Egypt}
\email{hkelsayied1989@yahoo.com}
\urladdr{http://www.tanta.edu.eg}

\author{Sameh Shenawy}
\address{Modern Academy for engineering and Technology, 11585 Maadi, Egypt}
\email[S. Shenawy]{drssshenawy@eng.modern-academy.edu.eg, drshenawy@mail.com}
\urladdr{http://www.modern-academy.edu.eg}

\author{Noha Syied}
\address{Modern Academy for engineering and Technology, 11585 Maadi, Egypt}
\email[N. Syied]{drnsyied@mail.com}
\urladdr{http://www.modern-academy.edu.eg}

\subjclass[2000]{Primary 53C21; Secondary 53C50, 53C80}
\keywords{Ricci soliton, Killing vector fields, concurrent vector fields,
doubly warped spacetimes.}
\thanks{The authors declare that there is no conflict of interest regarding
the publication of this paper.}

\begin{abstract}
This article aimed to study and explore conformal vector fields on doubly
warped product manifolds as well as on doubly warped space-times. Then we
derive sufficient conditions for matter and Ricci collineations on doubly
warped product manifolds. A special attention is paid to concurrent vector
fields. Finally, Ricci solitons on doubly warped product space-times
admitting conformal vector fields are considered.
\end{abstract}

\maketitle

\section{An introduction}

Bishop and O'Neill introduced Riemannian warped products to construct
manifolds with negative sectional curvature\cite{Bishop1969}. Since then
warped product structures have been widely studied. Doubly warped products
are generalizations of singly warped products. Beem, Ehrilish and Powell
noticed that there are many exact solutions to Einstein's field equation in
the form of warped product manifolds. Since then singly and doubly warped
product manifolds have became more indispensable to physicians and
mathematicians than ever. In \cite{Beem1982}, Beem and Powell studied
Lorentzian doubly warped product manifolds. Allison studied causal
properties, pseudo-covexity and hyperbolicity of doubly warped product
manifolds\cite{Allison1988,Allison1991}. Gebarowski considered doubly warped
products with harmonic Weyl conformal curvature tensor in\cite%
{Gebarowski1993} and conformally flat and conformally recurrent doubly
warped product manifolds in \cite{Gebarowski:1995,Gebarowski1996}. Bulent
Unal studied geodesic completeness of Riemannian and Lorentzian doubly
warped products\cite{Unal:2001}. He also studied hyperbolicity of
generalized Robertson--Walker spacetimes with doubly warped product fibre.
In this paper, Unal finally considered some results about conformal vector
fields of doubly warped products. Doubly warped product submanifolds has
also been studied by many authors in various settings such as M. Faghfouri
and A. Majidi in \cite{Faghfouri:2015}, Olteanu in \cite%
{Olteanu2010,Olteanu2014}, Selcen Perktas and Erol Kilic in \cite%
{Perktas:2010} and many others. Doubly warped spacetimes are good examples
of Lorentzian doubly warped product manifolds.\ These spacetimes are of
interest since they produce many exact solutions to Einstein's field
equations.

In physics, symmetry assumptions are used to understand the relation between
geometry and matter of a spacetime given by Einstein's field equation. For
example, the metric tensor of (pseudo-)Riemannian manifold does not change
under the flow of a Killing vector field i.e. the flow of a Killing vector
field generates a spacetime symmetry. The number of independent Killing
vector fields measures the degree of symmetry of a (pseudo-)Riemannian
manifold. Conformal vector fields have also a well-known geometrical and
physical interpretations and have been studied on (pseudo-)Riemannian
manifolds for a long time. The existence of a conformal vector field on a
spacetime is specially useful to study its geometry. The flow of a conformal
vector consists of conformal transformations of the Riemannian manifold.
Thus, the problems of existence and characterization of different types of
conformal vector fields in different spaces are important and are widely
discussed by both mathematicians and physicists (for example see \cite%
{Berestovskii2008,Deshmokh20141,Deshmokh20142,Kuhnel:1997,Sanchez1999,Steller2006}
and further references contained therein).

The aim of the present paper is to study and explore conformal vector fields
on doubly warped product manifolds as well as doubly warped spacetimes. We
derive many characterizations of conformal vector fields on doubly warped
product manifolds and doubly warped spacetimes. Then we study matter and
Ricci collineation on doubly warped manifolds. One may notice that, after
Pereleman used Ricci soliton to solve the Poincare conjecture posed in 1904,
a growing body of research has continued to study Ricci soliton.
Accordingly, we study Ricci solitons on doubly warped product spacetimes
admitting many types of conformal vector fields. We get some partial answers
of the questions: What do a doubly warped Ricci soliton factors inherit? and
what are conditions under which a doubly warped spacetime is a doubly warped
Ricci soliton?

This article is organized as follows. Section 2 represents some connection
and curvature related formulas on doubly warped product manifolds. In
section 3, we study conformal vector fields on doubly warped product
manifolds. Then we study conformal and concurrent vector fields on doubly
warped spacetimes in two subsections. Finally section 4 a study of Ricci
soliton on doubly warped spacetimes admitting these types of vector fields.
Almost all considerations and statements in this work are local.

\section{Preliminaries}

This section represents connection and curvature related formulas on doubly
warped product manifolds as a generalization of similar results on singly
warped products\cite{Bishop1969,Oneill1983}. Also, we will provide basic
definitions and properties of conformal vector fields.

Let $(M_{i},g_{i},D_{i})$ be two (pseudo-)Riemannian manifolds with metrics $%
g_{i}$ and Levi-Civita connections $D_{i}$ and let $f_{i}:M_{i}\rightarrow
\left( 0,\infty \right) $ be a positive function where $i=1,2$. Also,
suppose that $\pi _{i}:M_{1}\times M_{2}\rightarrow M_{i}$ is the natural
projection map of the Cartesian product $M_{1}\times M_{2}$ onto $M_{i}$
where $i=1,2$. The (pseudo-)Riemannian manifolds doubly warped product
manifold $M=_{f_{2}}M_{1}\times _{f_{1}}M_{2}$ is the product manifold $%
M=M_{1}\times M_{2}$ furnished with the metric tensor%
\begin{equation*}
g=\left( f_{2}\circ \pi _{2}\right) ^{2}\pi _{1}^{\ast }\left( g_{1}\right)
\oplus \left( f_{1}\circ \pi _{1}\right) ^{2}\pi _{2}^{\ast }\left(
g_{2}\right)
\end{equation*}%
where $^{\ast }$ denotes the pull-back operator on tensors. The functions $%
f_{i},$ $i=1,2$ are called the warping functions of the warped product
manifold $M$. In particular, if for example $f_{2}=1$, then $M=M_{1}\times
_{f_{1}}M_{2}$ is called a (singly) warped product manifold. A singly warped
product manifold $M_{1}\times _{f_{1}}M_{2}$ is said to be trivial if the
warping function $f_{1}$ is also constant \cite%
{Agaoka:1998,Faghfouri:2015,Gebarowski:1995,Perktas:2010,Ramos:2003,Unal:2001}%
. It is clear that the submanifolds $M_{1}\times \left\{ q\right\} $ and $%
\{p\}\times M_{2}$ are homothetic to $M_{1}$ and $M_{2}$ respectively for
each $p\in M_{1}$ and $q\in M_{2}$. We shall refer to these factor
submanifolds as $M_{1}$ and $M_{2}$. The lift $\bar{X}_{\left( p,q\right) }$
of a tangent vector $X_{p}\in T_{p}M_{1}$, $q\in M_{2}$ is the unique vector
in $T_{\left( p,q\right) }M$ such that%
\begin{equation*}
\pi _{1}^{\ast }\left( \bar{X}_{\left( p,q\right) }\right) =X_{p},\text{ \ \
\ \ }\pi _{2}^{\ast }\left( \bar{X}_{\left( p,q\right) }\right) =0
\end{equation*}%
Similarly, if $X_{i}\in \mathfrak{X}\left( M_{i}\right) $, then the lift of $%
X_{i}$ to $\mathfrak{X}\left( M_{1}\times M_{2}\right) $ is the unique
vector field in $\mathfrak{X}\left( M_{1}\times M_{2}\right) $ that is $\pi
_{i}-$related to $X_{i}$ and $\pi _{j}-$related to zero vector field in $%
\mathfrak{X}\left( M_{j}\right) ,$ $i\neq j$ i.e. a vector field $X_{i}$ on $%
M_{i}$ is identified with the horizontal or the vertical vector field on $%
M_{1}\times M_{2}$ that is $\pi _{i}-$related to $X_{i}$. Throughout this
article we use the same notation for a vector field and for its lift to the
product manifold. A function $\omega _{i}$ on $M_{i}$ will be identified
with $\omega _{i}\circ \pi _{i}$. Thus we have two different meanings for
the gradient of $\omega _{i}$, namely \textrm{grad}$\left( \omega _{i}\circ
\pi _{i}\right) \in $ $\mathfrak{X}\left( M_{1}\times M_{2}\right) $ and the
lift of the gradient $\nabla ^{i}\omega _{i}$ of $\omega _{i}$ to $\mathfrak{%
X}\left( M_{1}\times M_{2}\right) $. In fact we have%
\begin{equation*}
g\left( X_{i},\mathrm{grad}\left( \omega _{i}\circ \pi _{i}\right) \right)
=X_{i}\left( \omega _{i}\circ \pi _{i}\right) =X_{i}\left( \omega
_{i}\right) \circ \pi _{i}=\frac{1}{f_{j}^{2}}g\left( X_{i},\nabla
^{i}\omega _{i}\right)
\end{equation*}%
Therefore, \textrm{grad}$\left( \omega _{i}\circ \pi _{i}\right) =\frac{1}{%
f_{j}^{2}}\nabla ^{i}\omega _{i}$ (Note that we use the same notation for
the vector field $\nabla ^{i}\omega _{i}$ and for its lift to $\mathfrak{X}%
\left( M_{1}\times M_{2}\right) $).

Let $\left( M,g,D\right) $ is a pseudo-Riemannian doubly warped product
manifold of $\left( M_{i},g_{i},D_{i}\right) ,$ $i=1,2$ with dimensions $%
n_{i}$ where $n=n_{1}+n_{2}$. $R,R^{i}$ and {$\mathrm{Ric}$}, {$\mathrm{Ric}$%
}$^{i}$ denote the curvature tensor and Ricci curvature tensor on $M,M^{i}$
respectively. Moreover, $\nabla ^{i}f_{i},\bigtriangleup ^{i}f_{i}$ denote
gradient and Laplacian of $f_{i}$ on $M_{i}$ and $f_{i}^{\diamond
}=f_{i}\bigtriangleup ^{i}f_{i}+\left( n_{j}-1\right) g_{i}\left( \nabla
^{i}f_{i},\nabla ^{i}f_{i}\right) ,$ $i\neq j$. For the connection and
curvatures formulas of a pseudo-Riemannian doubly warped product manifold
see for example \cite{Agaoka:1998, Unal:PhD}.

A vector field $\zeta $ on a (pseudo-)Riemannian manifold $\left( N,h\right) 
$ with metric $h$ is called a conformal vector field with conformal factor $%
\rho $ if 
\begin{equation*}
\mathcal{L}_{\zeta }h=\rho h
\end{equation*}%
where $\mathcal{L}_{\zeta }$ is the Lie derivative on $N$ with respect to $%
\zeta $. If $\rho $ is constant or zero, $\zeta $ is called a homothetic or
Killing vector field on $N$ respectively. One can redefine conformal vector
fields using the following identity. Let $\zeta $ be a vector field on $M$,
then%
\begin{equation}
\left( \mathcal{L}_{\zeta }h\right) \left( X,Y\right) =h\left( D_{X}\zeta
,Y\right) +h\left( X,D_{Y}\zeta \right)
\end{equation}%
for any vector fields $X,Y\in \mathfrak{X}\left( N\right) $. A vector field $%
\zeta $ on a manifold $\left( N,h\right) $ is called concurrent if%
\begin{equation*}
D_{X}\zeta =X
\end{equation*}%
for any vector field $X\in \mathfrak{X}\left( N\right) $\cite{Chen2015}. Let 
$\zeta $ be a concurrent vector field, then%
\begin{equation*}
\left( \mathcal{L}_{\zeta }h\right) \left( X,Y\right) =2h\left( X,Y\right)
\end{equation*}%
and so $\zeta $ is homothetic with factor $\rho =2$. A zero vector field is
not concurrent. If both $\zeta $ and $\xi $ are concurrent vector fields,
then%
\begin{equation*}
D_{X}\left[ \zeta ,\xi \right] =0
\end{equation*}%
Also both $\zeta +\xi $ and $\lambda \zeta $ are not concurrent vector
fields. Finally, a Killing vector field is not concurrent. For example, a
vector field $\alpha \partial _{x}$ is a concurrent vector field on $\left( 
\mathbb{R},dx^{2}\right) $ if%
\begin{equation*}
D_{\partial _{x}}\left( \alpha \partial _{x}\right) =\partial _{x}
\end{equation*}%
i.e. $\alpha =x+a$. Thus concurrent vector fields on $\left( 
\mathbb{R}
,dx^{2}\right) $ are of the form $\left( x+a\right) \partial _{x}$.

The following result represents a simple characterization of Killing vector
fields. If $\left( N,h\right) $ is a pseudo-Riemannian manifold with
Riemannian connection $D$. A vector field $\zeta \in \mathfrak{X}\left(
N\right) $ is a Killing vector field if and only if%
\begin{equation}
h\left( D_{X}\zeta ,X\right) =0
\end{equation}%
for any vector field $X\in \mathfrak{X}\left( N\right) $.

The following discussion represents a good tool to characterize Killing
vector fields on pseudo-Riemannian warped product manifolds. In \cite%
{Unal2012,Shenawy:2015}, the authors obtained many characterizations of
Killing vector fields on warped product manifolds and on standard static
spacetimes using these results. Let $M=M_{1}\times _{f}M_{2}$ be a
pseudo-Riemannian warped product manifold with warping function $f$. Let $%
\zeta =\zeta _{1}+\zeta _{2}\in \mathfrak{X}\left( M\right) $ be a vector
field on $M$. Then%
\begin{eqnarray}
g\left( D_{X}\zeta ,X\right) &=&g_{1}\left( D_{X_{1}}^{1}\zeta
_{1},X_{1}\right) +f^{2}g_{2}\left( D_{X_{2}}^{2}\zeta _{2},X_{2}\right)
+f\zeta _{1}\left( f\right) \left\Vert X_{2}\right\Vert _{2}^{2}  \notag \\
\left( \mathcal{L}_{\zeta }g\right) \left( X,Y\right) &=&\left( \mathcal{L}%
_{\zeta _{1}}^{1}g_{1}\right) \left( X_{1},Y_{1}\right) +f^{2}\left( 
\mathcal{L}_{\zeta _{2}}^{2}g_{2}\right) \left( X_{2},Y_{2}\right) +2f\zeta
_{1}\left( f\right) g_{2}\left( X_{2},Y_{2}\right)  \notag
\end{eqnarray}%
for any vector field $X=X_{1}+X_{2}\in \mathfrak{X}\left( M\right) $, where $%
\mathcal{L}_{\zeta _{i}}^{i}$ is the Lie derivative on $M_{i}$ with respect
to $\zeta _{i},$ for $i=1,2$.

A pseudo-Riemannian manifold $M$ is said to admit a Ricci curvature
collineation if there is a vector field $\zeta \in \mathfrak{X}\left(
M\right) $ such that%
\begin{equation*}
\mathcal{L}_{\zeta }\mathrm{Ric}=0
\end{equation*}%
where $\mathrm{Ric}$ is the Ricci curvature tensor\cite{Hall:2004}. Finally,
a spacetime $M$ is said to admit a matter collineation if there is a vector
field $\zeta \in \mathfrak{X}\left( M\right) $ such that%
\begin{equation*}
\mathcal{L}_{\zeta }\mathrm{T}=0
\end{equation*}%
where $\mathrm{T}$ is the energy-momentum tensor\cite{Carot:1994}. The
Einstein's field equation with cosmological constant $\lambda $ is given by%
\begin{equation*}
\mathrm{Ric}-\frac{r}{2}g=\kappa \mathrm{T}-\lambda g
\end{equation*}%
where $r$ is the scalar curvature. Suppose that $\zeta $ is Killing, then%
\begin{equation*}
\mathcal{L}_{\zeta }T=0
\end{equation*}%
i.e.$\zeta $ is a matter collineation whereas a matter collineation need not
be a Killing vector field. Also, a Killing vector field is a Ricci curvature
collineation. The converse is not generally true.

\section{Conformal vector fields on doubly warped products}

In this section we investigate the relation between conformal vector fields
on doubly warped product manifolds and those conformal vector fields on the
product factors. Throughout this section, let $M=_{f_{2}}M_{1}\times
_{f_{1}}M_{2}$ be a pseudo-Riemannian doubly warped product manifold with
the metric tensor $g=f_{2}^{2}g_{1}\oplus f_{1}^{2}g_{2}$ and $%
f_{i}:M_{i}\rightarrow \left( 0,\infty \right) $ is a smooth function where $%
i=1,2$ and $\left( M_{i},g_{i}\right) $ are pseudo-Riemannian manifolds. The
following result gives us an important identity to study such relation\cite%
{Unal:2001}.

\begin{proposition}
Suppose that $\zeta _{1},X_{1},Y_{1}\in \mathfrak{X}(M_{1})$ and $\zeta
_{2},X_{2},Y_{2}\in \mathfrak{X}(M_{2})$, then%
\begin{eqnarray}
\left( \mathcal{L}_{\zeta }g\right) (X,Y) &=&f_{2}^{2}\left( \mathcal{L}%
_{\zeta _{1}}^{1}g_{1}\right) (X_{1},Y_{1})+f_{1}^{2}\left( \mathcal{L}%
_{\zeta _{2}}^{2}g_{2}\right) (X_{2},Y_{2})  \notag \\
&&+2f_{1}\zeta _{1}\left( f_{1}\right) g_{2}(X_{2},Y_{2})+2f_{2}\zeta
_{2}\left( f_{2}\right) g_{1}(X_{1},Y_{1})  \label{ke1}
\end{eqnarray}
\end{proposition}

where $\zeta =\zeta _{1}+\zeta _{2},$ $X=X_{1}+X_{2}$ and $Y=Y_{1}+Y_{2}$
are elements in $\mathfrak{X}(M)$.

In \cite{Unal:2001}, the author considered a characterization of conformal
vector fields on doubly warped product manifolds. In fact, it is just a
characterization of homothetic vector fields. The following theorem
represents a new characterization of conformal vector fields on doubly
warped product manifolds but the assumption here is less restrictive.

\begin{theorem}
\label{Thm1}A vector field $\zeta =\zeta _{1}+\zeta _{2}$ on a
pseudo-Riemannian doubly warped product $M=_{f_{2}}M_{1}\times _{f_{1}}M_{2}$
is a conformal vector field with conformal factor $\rho $ if and only if

\begin{enumerate}
\item $\zeta _{i}$ is a conformal vector field on $M_{i}$ with conformal
factor $\rho _{i},i=1,2$, and

\item $\rho _{1}+2\zeta _{2}\left( \ln f_{2}\right) =\rho _{2}+2\zeta
_{1}\left( \ln f_{1}\right) $
\end{enumerate}

Moreover, the conformal factor of $\zeta $ is $\rho =\rho _{i}+2\zeta
_{j}\left( \ln f_{j}\right) ,i\neq j$.
\end{theorem}

Before proceeding further, one may notice that a doubly warped product
metric $g$ on $M$ can be expressed as a conformal metric to a product metric
on $M_{1}\times M_{2}$ as follows:%
\begin{equation*}
g=f_{1}^{2}f_{2}^{2}\left( \frac{1}{f_{1}^{2}}g_{1}+\frac{1}{f_{2}^{2}}%
g_{2}\right) =f_{1}^{2}f_{2}^{2}\left( \bar{g}_{1}+\bar{g}_{2}\right)
=f_{1}^{2}f_{2}^{2}\bar{g}
\end{equation*}%
Let us consider the effect of replacing the metric $g$ on $M$ by $\bar{g}=%
\bar{g}_{1}+\bar{g}_{2}$. A similar discussion on $4-$dimensional spacetimes
is considered in \cite[Chapter 11]{Hall:2004}. Suppose that $\zeta =\zeta
_{1}+\zeta _{2}$ is a conformal vector field on $\left( M,\bar{g}\right) $
with factor $\bar{\rho}$, then%
\begin{equation*}
\mathcal{L}_{\zeta }g=\left[ 2\zeta _{2}\left( \ln f_{2}\right) +2\zeta
_{1}\left( \ln f_{1}\right) +\bar{\rho}\right] g
\end{equation*}%
Therefore, $\zeta $ is a conformal vector field on $\left( M,g\right) $ with
factor $\rho =\bar{\rho}\mathcal{+}2\zeta _{2}\left( \ln f_{2}\right)
+2\zeta _{1}\left( \ln f_{1}\right) $. A similar conclusion applies to $%
\left( M_{i},g_{i}\right) $ and $\left( M_{i},\bar{g}_{i}\right) $ where $%
\bar{\rho}_{i}=\rho -\zeta _{i}\left( \ln f_{i}\right) $. Thus, by using
results in \cite[Theorem 1]{Apostolopoulos:2005}, one can easily get the
following:

\begin{theorem}
Let $M=_{f_{2}}M_{1}\times _{f_{1}}M_{2}$ be a pseudo-Riemannian doubly
warped product equipped with the metric tensor $%
g=f_{2}^{2}g_{1}+f_{1}^{2}g_{2}$ and let $\bar{g}=\bar{g}_{1}+\bar{g}_{2}$
where $\bar{g}_{i}=\frac{1}{f_{i}^{2}}g_{i}$. Then,

\begin{enumerate}
\item a Killing vector field $\zeta =\zeta _{i}$ on $\left( M_{i},\bar{g}%
_{i}\right) $, for each $i=1,2$, is a Killing vector field on $\left( M,\bar{%
g}\right) $,

\item $\left( M,\bar{g}\right) $ admits a homothetic vector field if and
only if $\left( M_{i},\bar{g}_{i}\right) $ admits a homothetic vector field
for each $i=1,2$,

\item each conformal vector field on $\left( M,\bar{g}\right) $ is a
conformal vector field on $\left( M,g\right) $.
\end{enumerate}
\end{theorem}

The above theorem together with Theorem \ref{Thm1} imply the following
results.

\begin{theorem}
Let $\zeta =\zeta _{1}+\zeta _{2}$ be a vector field on be a
pseudo-Riemannian doubly warped product $M=_{f_{2}}M_{1}\times _{f_{1}}M_{2}$
equipped with the metric tensor $g=f_{2}^{2}g_{1}+f_{1}^{2}g_{2}$. Assume
that $\zeta _{i}$ is a Killing vector field on $\left( M_{i},g_{i}\right) $
for each $i=1,2$ and $\zeta _{1}\left( \ln f_{1}\right) =\zeta _{2}\left(
\ln f_{2}\right) $. Then $\zeta $ is a conformal vector field on $M$.
\end{theorem}

\begin{theorem}
Let $\zeta _{i}\in \mathfrak{X}\left( M_{i}\right) $ be homothetic vector
fields on $\left( M_{i},g_{i}\right) $ with factors $a_{i}$ for each $i=1,2$%
. Assume that $\zeta _{1}\left( f_{1}\right) =\zeta _{2}\left( f_{2}\right)
=0$. Then $\zeta =a_{2}\zeta _{1}+a_{1}\zeta _{2}$ is a homothetic vector
field on $\left( M,g\right) $ with factor $a_{1}a_{2}$.
\end{theorem}

\begin{corollary}
The dimension of the conformal group $C\left( M,g\right) $ on a
pseudo-Riemannian doubly warped product $M=_{f_{2}}M_{1}\times _{f_{1}}M_{2}$
is at least%
\begin{equation*}
\mathrm{dim}K_{1}\left( M_{1},\bar{g}_{1}\right) +\mathrm{dim}K_{1}\left(
M_{2},\bar{g}_{2}\right) 
\end{equation*}%
where $K_{i}\left( M_{i},\bar{g}_{i}\right) $ is the isometry group of $%
\left( M_{i},\bar{g}_{i}\right) $.
\end{corollary}

Again Theorem \ref{Thm1} together with Lemma 2.1 in \cite{Kuhnel:1997} yield
the following result.

\begin{theorem}
Let $\zeta =\zeta _{1}+\zeta _{2}$ be a vector field on a pseudo-Riemannian
doubly warped product $M=_{f_{2}}M_{1}\times _{f_{1}}M_{2}$ such that

\begin{enumerate}
\item $\zeta _{i}$ is a conformal vector field on $M_{i}$ with conformal
factor $\rho _{i},i=1,2$, and

\item $\rho _{1}+2\zeta _{2}\left( \ln f_{2}\right) =\rho _{2}+2\zeta
_{1}\left( \ln f_{1}\right) $
\end{enumerate}

Then $\zeta $ preserves the Ricci curvature if and only if $H^{u}=0$ where $%
u=\rho _{2}+2\zeta _{1}\left( \ln f_{1}\right) $. Moreover, $\zeta $
preserves the conformal class of the Ricci tensor(i.e. $\mathcal{L}_{\zeta }%
\mathrm{Ric}=\lambda g$ for some function $\lambda $) if and only if $\nabla
\left( div\left( \zeta \right) \right) $ is a conformal vector field.
\end{theorem}

\begin{theorem}
Let $\zeta =\zeta _{1}+\zeta _{2}$ be a vector field on a pseudo-Riemannian
doubly warped product $M=_{f_{2}}M_{1}\times _{f_{1}}M_{2}$. $\zeta $ has
constant length along the integral curve $\alpha $ of the vector field $%
X=X_{1}+X_{2}\in \mathfrak{X}\left( M\right) $ if one of the following
conditions holds
\end{theorem}

\begin{enumerate}
\item $X_{i}\left( f_{i}\right) =0$ and $\zeta _{i}$ is parallel along $\pi
_{i}\circ \alpha $ for each $i=1,2$.

\item $X_{i}\left( f_{i}\right) =0$ and $\zeta _{i}$ has a constant length
along $\pi _{i}\circ \alpha $ for each $i=1,2$.
\end{enumerate}

\begin{proof}
Let $\zeta =\zeta _{1}+\zeta _{2}\in \mathfrak{X}\left( M\right) $ be a
vector field on $M$. Then%
\begin{eqnarray*}
g(D_{X}\zeta ,\zeta ) &=&g(D_{X_{1}+X_{2}}\zeta _{1}+\zeta _{2},\zeta ) \\
&=&g(D_{X_{1}}\zeta _{1}+D_{X_{1}}\zeta _{2}+D_{X_{2}}\zeta
_{1}+D_{X_{2}}\zeta _{2},\zeta ) \\
&=&f_{2}^{2}g_{1}\left( D_{X_{1}}^{1}\zeta _{1},\zeta _{1}\right)
+f_{1}^{2}g_{2}\left( D_{X_{2}}^{2}\zeta _{2},\zeta _{2}\right) \\
&&+f_{1}X_{1}\left( f_{1}\right) \left\Vert \zeta _{2}\right\Vert
_{2}^{2}+f_{2}X_{2}(f_{2})\left\Vert \zeta _{1}\right\Vert _{1}^{2}
\end{eqnarray*}%
In both cases $X_{i}\left( f_{i}\right) =0$, and hence 
\begin{equation*}
g(D_{X}\zeta ,\zeta )=f_{2}^{2}g_{1}\left( D_{X_{1}}^{1}\zeta _{1},\zeta
_{1}\right) +f_{1}^{2}g_{2}\left( D_{X_{2}}^{2}\zeta _{2},\zeta _{2}\right)
\end{equation*}%
The first condition implies that $D_{X_{i}}^{i}\zeta _{i}=0$ and so $%
g_{i}\left( D_{X_{i}}^{i}\zeta _{i},\zeta _{i}\right) =0$. The second
condition implies that $g_{i}\left( \zeta _{i},\zeta _{i}\right) $ is
constant and so%
\begin{equation*}
=2g_{i}\left( D_{X_{i}}^{i}\zeta _{i},\zeta _{i}\right)
\end{equation*}

and therefore%
\begin{equation*}
g(D_{X}\zeta ,\zeta )=0
\end{equation*}%
i.e. $\zeta $ has a constant length along the integral curve $\alpha $ of
the vector field $X$
\end{proof}

\begin{theorem}
\label{th4}Let $\zeta =\zeta _{1}+\zeta _{2}\in \mathfrak{X}(M)$ be a
conformal vector field on a pseudo-Riemannian doubly warped product $%
M=_{f_{2}}M_{1}\times _{f_{1}}M_{2}$ along a curve $\alpha $ with unit
tangent vector $T=V_{1}+V_{2}$. Then%
\begin{equation*}
div\left( \zeta \right) =n\left[ f_{2}^{2}g_{1}\left( D_{V_{1}}^{1}\zeta
_{1},V_{1}\right) +f_{1}^{2}g_{2}\left( D_{V_{2}}^{2}\zeta _{2},V_{2}\right)
+f_{2}\zeta _{2}\left( f_{2}\right) \left\Vert V_{1}\right\Vert
_{1}^{2}+f_{1}\zeta _{1}\left( f_{1}\right) \left\Vert V_{2}\right\Vert
_{2}^{2}\right]
\end{equation*}
\end{theorem}

\begin{proof}
Let $\zeta $ be a conformal vector field with conformal factor $\rho $. Then%
\begin{equation*}
\left( \mathcal{L}_{\zeta }g\right) (X,Y)=\rho g\left( X,Y\right)
\end{equation*}%
Let $X=Y=T,$ then%
\begin{equation*}
2g(D_{T}\zeta ,T)=\rho g\left( T,T\right)
\end{equation*}%
then the conformal factor $\rho $ is given by%
\begin{equation*}
\rho =2g\left( D_{T}\zeta ,T\right)
\end{equation*}%
Suppose that $\zeta =\zeta _{1}+\zeta _{2}$ and $T=V_{1}+V_{2}$, then%
\begin{eqnarray*}
\rho &=&2g\left( D_{V_{1}}\zeta _{1}+D_{V_{1}}\zeta _{2}+D_{V_{2}}\zeta
_{1}+D_{V_{2}}\zeta _{2},T\right) \\
&=&2f_{2}^{2}g_{1}\left( D_{V_{1}}^{1}\zeta _{1},V_{1}\right)
+2f_{1}^{2}g_{2}\left( D_{V_{2}}^{2}\zeta _{2},V_{2}\right) \\
&&+2f_{2}\zeta _{2}(f_{2})\left\Vert V_{1}\right\Vert _{1}^{2}+2f_{1}\zeta
_{1}\left( f_{1}\right) \left\Vert V_{2}\right\Vert _{2}^{2}
\end{eqnarray*}%
But the conformal factor is given by 
\begin{equation*}
2div\left( \zeta \right) =\rho n
\end{equation*}%
which completes the proof.
\end{proof}

\subsection{Conformal vector fields on doubly warped spacetimes}

A doubly warped spacetime is a doubly warped product manifold $%
M=_{f_{2}}M_{1}\times _{f_{1}}M_{2}$ where one of the factors, say $M_{1}$,
has a Lorentz signature and the second is Riemannian. Ramos et al.
considered an invariant characterization of $4-$dimensional doubly warped
spacetimes\cite{Ramos:2003}. Among many other results, they obtained
necessary and sufficient conditions for a (locally) double warped spacetime
to be conformally related to a $1+3$ or $2+2$ decomposable spacetime. Then
they studied the conformal algebra of $2+2$ decomposable spacetimes in
section IV and $1+3$ decomposable spacetimes in section V. For a detailed
discussion of conformally related $1+3$ and $2+2$ reducible spacetimes see 
\cite{Carot:2002, Carot:2008} and for an extensive self contained study of
conformal symmetry of $4-$dimensional spacetimes, the reader is referred to 
\cite{Hall:2004}.

We restrict our study of conformal vector fields on doubly warped spacetimes
to \textrm{dim}$\left( M_{1}\right) \leq 2$ since this case generalizes some
well-known exact solutions of the Einstein field equations and the beginning
of this section represents such study irrespective of the dimension of the
factors. For this case, either \textrm{dim}$\left( M_{1}\right) =1$ or 
\textrm{dim}$\left( M_{1}\right) =2$. In the following we deal with both
subcases separately. Let us first consider doubly warped spacetimes with a $%
1-$dimensional base.

Let $(M,g)$ be a Riemannian manifold and $I$ be an open connected interval
equipped with the metric $-\mathrm{dt}^{2}$. A doubly warped spacetime $\bar{%
M}=_{f}I\times _{\sigma }M$ is the product $I\times M$ furnished with the
metric%
\begin{equation*}
\bar{g}=-f^{2}\mathrm{dt}^{2}\oplus \sigma ^{2}g
\end{equation*}%
where $f:M\rightarrow \left( 0,\infty \right) $ and $\sigma :I\rightarrow
\left( 0,\infty \right) $ are smooth functions. $\bar{M}$ is a generalized
Robertson-Walker spacetime if $f$ is constant and a standard static
spacetime if $\sigma $ is constant.

An investigation of $4-$dimensional spacetimes that are conformally related
to $1+3$ reducible spacetimes was carried out with many examples in the
aforementioned references \cite{Carot:2008,Ramos:2003}. A classification of
these spacetimes according to their conformal algebra is considered in the
first reference whereas a special attention is paid to gradient conformal
vector fields in the second reference.

\begin{theorem}
A time-like vector field $\bar{\zeta}=h\partial _{t}$ is a conformal vector
field on a doubly warped spacetime $\bar{M}=_{f}I\times _{\sigma }M$ if and
only if $h=a\sigma $ where $a$ is a non-negative constant. Moreover, the
conformal factor is $\rho =2\dot{h}$.
\end{theorem}

\begin{proof}
If $h=0$, then $a=0$ and the result is obvious. Now, we assume that $h\neq 0$%
. Using equation (\ref{ke1}), we get that%
\begin{eqnarray*}
\left( \mathcal{\bar{L}}_{\bar{\zeta}}\bar{g}\right) (\bar{X},\bar{Y})
&=&-2xyf^{2}\left[ \dot{h}+\zeta \left( \ln f\right) \right] +\sigma
^{2}\left( \mathcal{L}_{\zeta }g\right) (X,Y)+2h\sigma \dot{\sigma}g\left(
X,Y\right) \\
&=&-2xyf^{2}\dot{h}+2h\sigma \dot{\sigma}g\left( X,Y\right) \\
&=&2\dot{h}f^{2}g_{I}\left( x\partial _{t},y\partial _{t}\right) +\frac{2h%
\dot{\sigma}}{\sigma }\sigma ^{2}g\left( X,Y\right)
\end{eqnarray*}%
Suppose that $h=a\sigma $, then%
\begin{equation*}
\left( \mathcal{\bar{L}}_{\bar{\zeta}}\bar{g}\right) (\bar{X},\bar{Y})=2\dot{%
h}\bar{g}(\bar{X},\bar{Y})
\end{equation*}%
i.e.$\bar{\zeta}=h\partial _{t}$ is a conformal vector field with conformal
factor $\rho =2\dot{h}$. Conversely, suppose that $\overline{\zeta }%
=h\partial _{t}\in \mathfrak{X}(\bar{M})$ is a conformal vector field with
factor $\rho ,$ then%
\begin{equation*}
\left( \mathcal{\bar{L}}_{\bar{\zeta}}\bar{g}\right) (\bar{X},\bar{Y})=\rho 
\bar{g}(\bar{X},\bar{Y})
\end{equation*}%
for any vector fields $\overline{X},\overline{Y}\in \mathfrak{X}(\overline{M}%
)$. Now, by equation \ref{ke1}, we get that%
\begin{equation*}
\rho \bar{g}(\bar{X},\bar{Y})=2\dot{h}f^{2}g_{I}\left( x\partial
_{t},y\partial _{t}\right) +\frac{2h\dot{\sigma}}{\sigma }\sigma ^{2}g\left(
X,Y\right)
\end{equation*}%
Let $X=Y=0$, we get that%
\begin{equation*}
\rho \bar{g}(x\partial _{t},y\partial _{t})=2\dot{h}f^{2}g_{I}\left(
x\partial _{t},y\partial _{t}\right)
\end{equation*}%
i.e. $\rho =2\dot{h}$. Now, let $x=y=0$, we get that $\rho \sigma =2h\dot{%
\sigma}$.

These two differential equations imply that $h=a\sigma $ for some positive
constant $a$.
\end{proof}

\begin{theorem}
\label{th1}A vector field $\bar{\zeta}=h\partial _{t}+\zeta $ is a Killing
vector field on a doubly warped spacetime $\bar{M}=_{f}I\times _{\sigma }M$
if and only if\ one of the following conditions hold

\begin{enumerate}
\item $\bar{\zeta}$ is time-like and $\dot{h}=\dot{\sigma}=0$

\item $\bar{\zeta}$ is space-like where $\zeta $ is a Killing vector field
on $M$ and $\zeta \left( f\right) =0$

\item $\dot{h}=-\zeta \left( \ln f\right) $ and $\zeta $ is a conformal
vector field on $M$ with conformal factor $\rho _{2}=-\dfrac{2h\dot{\sigma}}{%
\sigma }$
\end{enumerate}
\end{theorem}

\begin{proof}
The first assertion is a special case of the above theorem. For the second
assertion, let $h=0$ in equation (\ref{ke1}). Thus%
\begin{equation*}
\left( \mathcal{\bar{L}}_{\bar{\zeta}}\bar{g}\right) (\bar{X},\bar{Y}%
)=-2xyf\zeta \left( f\right) +\sigma ^{2}\left( \mathcal{L}_{\zeta }g\right)
(X,Y)
\end{equation*}%
Suppose that $\zeta $ is a Killing vector field on $M$ and $\zeta \left(
f\right) =0,$ then%
\begin{equation*}
\left( \mathcal{\bar{L}}_{\bar{\zeta}}\bar{g}\right) (\bar{X},\bar{Y})=0
\end{equation*}

The converse is direct. Finally, let $\bar{\zeta}$ be a Killing vector field
on $\bar{M}=_{f}I\times _{\sigma }M$, then%
\begin{eqnarray*}
0 &=&\left( \mathcal{\bar{L}}_{\bar{\zeta}}\bar{g}\right) (\bar{X},\bar{Y})
\\
0 &=&-2xyf^{2}\left[ \dot{h}+\zeta \left( \ln f\right) \right] +\sigma
^{2}\left( \mathcal{L}_{\zeta }g\right) (X,Y)+2h\sigma \dot{\sigma}g\left(
X,Y\right)
\end{eqnarray*}%
Let $X=Y=0,$then%
\begin{equation*}
-2xyf^{2}\left[ \dot{h}+\zeta \left( \ln f\right) \right] =0
\end{equation*}%
and so $\dot{h}=-\zeta \left( \ln f\right) $. Thus%
\begin{equation*}
\left( \mathcal{L}_{h\partial _{t}}^{I}g_{I}\right) (x\partial
_{t},y\partial _{t})=-2\zeta \left( \ln f\right) g_{I}(x\partial
_{t},y\partial _{t})
\end{equation*}%
i.e. $h\partial _{t}$ is a conformal vector field on $I$ with conformal
factor $-2\zeta \left( \ln f\right) $. Now let $x=y=0$, then%
\begin{equation*}
\left( \mathcal{L}_{\zeta }g\right) (X,Y)=-\frac{2h\dot{\sigma}}{\sigma }%
g\left( X,Y\right)
\end{equation*}%
i.e. $\zeta $ is a conformal vector field on $M$ with conformal factor $\rho
=-\frac{2h\dot{\sigma}}{\sigma }$.

Conversely, suppose that $\dot{h}=-\zeta \left( \ln f\right) $ and $\zeta $
is a conformal vector field on $M.$with conformal factor $\rho =-\frac{2h%
\dot{\sigma}}{\sigma },$ then%
\begin{equation*}
\left( \mathcal{L}_{\zeta }g\right) (X,Y)=-\frac{2h\dot{\sigma}}{\sigma }%
g\left( X,Y\right)
\end{equation*}%
Thus, 
\begin{eqnarray*}
\left( \mathcal{\bar{L}}_{\bar{\zeta}}\bar{g}\right) (\bar{X},\bar{Y})
&=&-2xyf^{2}\left[ \dot{h}+\zeta \left( \ln f\right) \right] +\sigma
^{2}\left( \mathcal{L}_{\zeta }g\right) (X,Y)+2h\sigma \dot{\sigma}g\left(
X,Y\right) \\
&=&0
\end{eqnarray*}%
i.e.$\bar{\zeta}=h\partial _{t}+\zeta $ is a Killing vector field on $\bar{M}%
=_{f}I\times _{\sigma }M$.
\end{proof}

\begin{corollary}
Let $\bar{\zeta}=h\partial _{t}+\zeta $ be vector field on a doubly warped
spacetime $\bar{M}=_{f}I\times _{\sigma }M$ obeying Einstein's field
equation. Then

\begin{enumerate}
\item $\bar{M}$ admits a time-like matter collineation $\bar{\zeta}%
=h\partial _{t}$ if $\dot{h}=\dot{\sigma}=0$

\item $\bar{M}$ admits a space-like matter collineation $\bar{\zeta}=\zeta $
if $\zeta $ is a Killing vector field on $M$ and $\zeta \left( f\right) =0$

\item $\bar{M}$ admits a matter collineation $\bar{\zeta}=h\partial
_{t}+\zeta $ if $\dot{h}=-\zeta \left( \ln f\right) $ and $\zeta $ is a
conformal vector field on $M$ with conformal factor $\rho _{2}=-\dfrac{2h%
\dot{\sigma}}{\sigma }$
\end{enumerate}
\end{corollary}

The following result is a direct consequence of Theorem \ref{th4}.

\begin{corollary}
Let $\bar{\zeta}=h\partial _{t}+\zeta \in \mathfrak{X}(\bar{M})$ be a
conformal vector field on a doubly warped spacetime $\bar{M}=_{f}I\times
_{\sigma }M$ along a curve $\alpha $ with unit tangent vector $\bar{V}%
=v\partial _{t}+V$. Then the conformal factor $\rho $ of $\bar{\zeta}$ is
given by%
\begin{equation*}
\rho =2\dot{h}+2\sigma ^{2}g\left( \left[ \zeta ,V\right] ,V\right) +2\left(
h\sigma \dot{\sigma}-\dot{h}\sigma ^{2}\right) g\left( V,V\right)
\end{equation*}
\end{corollary}

In the sequel, we present doubly warped spacetimes with a $2-$dimensional
base. In this subcase, $2+n$ doubly warped spacetimes are considered to be
doubly warped product manifolds with a $2-$dimensional pseudo-Riemannian
base and an $n-$dimensional Riemannian fibre. A $2+n$ doubly warped
spacetime is clearly conformal to a product manifold. In \cite%
{Carot:2002,Ramos:2003} and references therein, the Lie conformal algebra of
conformally related $2+2$ reducible spacetimes is extensively studied. Many
interesting results and examples are given there. For example, in \cite%
{Carot:2002}, Carot and Tupper considered an invariant characterization that
imposes conditions on the conformal factor and on two null vectors.
Moreover, N Van den Bergh considered non-conformally flat perfect fluids
spacetimes which are conformally $2+2-$decomposable spacetimes with factor
spaces of constant curvature\cite{Bergh:2012}.

It is well-known that each $2-$dimensional manifold is conformally flat.
Thus we may simply take the base manifold as $\left( 
\mathbb{R}
^{2},\mathrm{ds}^{2}\right) $ where $\mathrm{ds}^{2}=-dt^{2}+dx^{2}$. Let $%
\bar{M}=_{f}%
\mathbb{R}
^{2}\times _{\sigma }M$ be $\left( 2+n\right) -$dimensional doubly warped
product spacetime furnished with the metric $\bar{g}=f^{2}ds^{2}+\sigma
^{2}g $.

\begin{proposition}
Suppose that $h\left( t\right) \partial _{t}+u\left( x\right) \partial
_{x},h_{i}\left( t\right) \partial _{t}+u_{i}\left( x\right) \partial
_{x}\in \mathfrak{X}(%
\mathbb{R}
^{2}),i=1,2$ and $\zeta ,X_{1},X_{2}\in \mathfrak{X}\left( M\right) $, then%
\begin{eqnarray}
\left( \mathcal{\bar{L}}_{\bar{\zeta}}\bar{g}\right) (\bar{X},\bar{Y})
&=&2f^{2}\left( -h_{1}h_{2}\dot{h}+u_{1}u_{2}u^{\prime }\right) +\sigma
^{2}\left( \mathcal{L}_{\zeta }g\right) (X,Y)  \notag \\
&&+2\sigma \left( h\sigma _{t}+u\sigma _{x}\right) g(X,Y)+2f\zeta \left(
f\right) \left( -h_{1}h_{2}+u_{1}u_{2}\right)
\end{eqnarray}%
where $\bar{\zeta}=h\partial _{t}+u\partial _{x}+\zeta $ and $\bar{X}%
_{i}=h_{i}\partial _{t}+u_{i}\partial _{x}+X_{i}$ are elements in $\mathfrak{%
X}\left( \bar{M}\right) $.
\end{proposition}

\begin{corollary}
A vector field $\bar{\zeta}=h\left( t\right) \partial _{t}+u\left( x\right)
\partial _{x}+\zeta \in \mathfrak{X}\left( \bar{M}\right) $ is a Killing
vector field if one of the following conditions hold

\begin{enumerate}
\item $\zeta =0$, $h=a$, $u=b$ and $a\sigma _{t}+b\sigma _{x}=0$.

\item $\zeta $ is Killing on $M$, $h=u=0$ and $\zeta \left( f\right) =0$
\end{enumerate}
\end{corollary}

\subsection{Concurrent vector fields on doubly warped spacetimes}

In this subsection we study concurrent vector fields on doubly warped
spacetimes with a $1-$dimensional base. One can extend most of the results
to doubly warped spacetimes with a $2-$dimensional base. Throughout this
subsection, let $\bar{M}=_{f}I\times _{\sigma }M$ be a doubly warped
spacetime equipped with the metric tensor $\bar{g}=-f^{2}dt^{2}\oplus \sigma
^{2}g$.

\begin{theorem}
A vector field $\bar{\zeta}=h\partial _{t}+\zeta $ on a doubly warped
spacetime $\bar{M}=_{f}I\times _{\sigma }M$ is a concurrent vector field if

\begin{enumerate}
\item $\zeta $ and $h\partial _{t}$ are concurrent vector fields on $M$ and $%
I$ respectively, and

\item both $f$ and $\sigma $ are constant.
\end{enumerate}
\end{theorem}

\begin{proof}
Suppose that $\bar{X}=x\partial _{t}+X\in \mathfrak{X}(\bar{M})$ is any
vector field on $\bar{M}$, then%
\begin{eqnarray*}
\bar{D}_{\bar{X}}\bar{\zeta} &=&\bar{D}_{x\partial _{t}}h\partial _{t}+\bar{D%
}_{x\partial _{t}}\zeta +\bar{D}_{X}h\partial _{t}+\bar{D}_{X}\zeta \\
&=&x\dot{h}\partial _{t}+\frac{xhf}{\sigma ^{2}}\nabla f+x(\ln \sigma \dot{)}%
\zeta +\zeta (\ln f)x\partial _{t} \\
&&+h(\ln \sigma \dot{)}X+X(\ln f)h\partial _{t}+D_{X}\zeta -\frac{\sigma 
\dot{\sigma}}{f^{2}}g\left( X,\zeta \right) \partial _{t} \\
&=&\left[ x\dot{h}+\zeta (\ln f)x+X(\ln f)h-\frac{\sigma \dot{\sigma}}{f^{2}}%
g\left( X,\zeta \right) \right] \partial _{t} \\
&&+\frac{xhf}{\sigma ^{2}}\nabla f+x(\ln \sigma \dot{)}\zeta +h(\ln \sigma 
\dot{)}X+D_{X}\zeta
\end{eqnarray*}%
Now suppose that $h\partial _{t}$ and $\zeta $ are concurrent vector field,
then $\dot{h}=1$ and $D_{X}\zeta =X$. If both $\sigma $ and $f$ are
constant, then%
\begin{eqnarray*}
\bar{D}_{\bar{X}}\bar{\zeta} &=&x\dot{h}\partial _{t}+D_{X}\zeta \\
&=&\bar{X}
\end{eqnarray*}%
i.e. $\bar{\zeta}$ is concurrent.
\end{proof}

It is well-known that a homothetic vector field is a matter collineation.
Thus the above theorem yields the following result.

\begin{corollary}
A vector field $\bar{\zeta}=h\partial _{t}+\zeta $ on a doubly warped
spacetime $\bar{M}=_{f}I\times _{\sigma }M$ is a matter collineation if

\begin{enumerate}
\item $\zeta $ and $h\partial _{t}$ are concurrent vector fields on $M$ and $%
I$ respectively, and

\item both $f$ and $\sigma $ are constant.
\end{enumerate}
\end{corollary}

\begin{theorem}
Let $\bar{\zeta}=h\partial _{t}+\zeta $ be a concurrent vector field on a
doubly warped spacetime $\bar{M}=_{f}I\times _{\sigma }M$ equipped with the
metric tensor $\bar{g}=-f^{2}dt^{2}\oplus \sigma ^{2}g$. Then $\zeta $ is a
concurrent vector field on $M$ if one of the following conditions holds.

\begin{enumerate}
\item $h=0$, or

\item $\sigma $ is a constant i.e. $\bar{M}$ is a standard static spacetime.
\end{enumerate}

Moreover, condition (1) implies condition (2) and the converse is true if $f$
is not constant.
\end{theorem}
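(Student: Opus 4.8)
The plan is to start from the expression for $\bar D_{\bar X}\bar\zeta$ that was computed in the proof of the previous theorem, namely
\begin{equation*}
\bar D_{\bar X}\bar\zeta=\left[x\dot h+\zeta(\ln f)x+X(\ln f)h-\frac{\sigma\dot\sigma}{f^{2}}g(X,\zeta)\right]\partial_{t}+\frac{xhf}{\sigma^{2}}\nabla f+x(\ln\sigma\dot{)}\zeta+h(\ln\sigma\dot{)}X+D_{X}\zeta,
\end{equation*}
and to impose the concurrency hypothesis $\bar D_{\bar X}\bar\zeta=\bar X=x\partial_t+X$ for every $\bar X$. Matching the $\partial_t$-components and the $M$-components separately gives two identities valid for all $x\in\mathbb R$ and all $X\in\mathfrak X(M)$: the $M$-part reads $\frac{xhf}{\sigma^{2}}\nabla f+x\dot\sigma\sigma^{-1}\zeta+h\dot\sigma\sigma^{-1}X+D_{X}\zeta=X$, and the $\partial_t$-part reads $x\dot h+x\zeta(\ln f)+hX(\ln f)-\frac{\sigma\dot\sigma}{f^{2}}g(X,\zeta)=x$.

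Next I would treat the two cases. In case (1), setting $h=0$ in the $M$-component identity collapses it immediately to $x\dot\sigma\sigma^{-1}\zeta+D_{X}\zeta=X$; since this must hold for all $x$, choosing $x=0$ forces $D_X\zeta=X$ for every $X\in\mathfrak X(M)$, i.e.\ $\zeta$ is concurrent on $M$ (one should also note $\zeta\neq 0$, which follows because a zero field cannot satisfy $D_X\zeta=X$, consistent with the remark in Section 2). In case (2), $\sigma$ constant makes $\dot\sigma=0$, so the $M$-component identity becomes $\frac{xhf}{\sigma^{2}}\nabla f+D_X\zeta=X$ for all $x$; again taking $x=0$ gives $D_X\zeta=X$, so $\zeta$ is concurrent on $M$. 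This is the core of the argument; the main obstacle is mostly bookkeeping — correctly isolating components of the connection formula and being careful that the identities are required to hold for all choices of the test vector $\bar X$, so that one is entitled to specialize $x$ or $X$ freely.

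For the final sentence, that condition (1) implies condition (2): assume $h=0$. Feeding $h=0$ back into the $\partial_t$-component identity gives $x\dot h+x\zeta(\ln f)-\frac{\sigma\dot\sigma}{f^{2}}g(X,\zeta)=x$ for all $x$ and $X$; with $h\equiv 0$ we have $\dot h=0$, so this reads $x\zeta(\ln f)-\frac{\sigma\dot\sigma}{f^{2}}g(X,\zeta)=x$. Setting $x=0$ yields $\frac{\sigma\dot\sigma}{f^{2}}g(X,\zeta)=0$ for every $X$; since $\zeta$ is concurrent it is in particular nonvanishing, so $g(\cdot,\zeta)$ is a nonzero one-form and hence $\sigma\dot\sigma=0$, i.e.\ $\dot\sigma=0$, so $\sigma$ is constant, which is condition (2). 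For the converse under the extra hypothesis that $f$ is not constant: assume $\sigma$ constant; then $\zeta$ is concurrent by case (2), and the $\partial_t$-identity (with $\dot\sigma=0$) becomes $x\dot h+x\zeta(\ln f)+hX(\ln f)=x$ for all $x,X$. Taking $x=0$ gives $hX(\ln f)=0$ for every $X$; since $f$ is not constant there is some $X$ with $X(\ln f)\neq 0$, forcing $h=0$, which is condition (1). I expect this last equivalence to be the subtlest point, precisely because it hinges on these nonvanishing arguments (concurrency $\Rightarrow\zeta\neq 0$, and $f$ non-constant $\Rightarrow X(\ln f)\neq 0$ for some $X$) to cancel factors.
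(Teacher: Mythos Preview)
Your proof is correct and follows essentially the same approach as the paper: both start from the expansion of $\bar D_{\bar X}\bar\zeta$ obtained in the preceding theorem, impose concurrency, and then specialize to $x=0$ to extract the equations $D_X\zeta=\bigl(1-\tfrac{h\dot\sigma}{\sigma}\bigr)X$ and $hX(\ln f)=\tfrac{\sigma\dot\sigma}{f^{2}}g(X,\zeta)$, from which the two cases and the final equivalence follow by the same nonvanishing arguments you spell out. The only cosmetic difference is ordering: the paper sets $x=0$ first and then treats the cases, whereas you split into cases before specializing $x$.
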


\begin{proof}
From the above proof we have%
\begin{equation}
x=x\dot{h}+\zeta (\ln f)x+X(\ln f)h-\frac{\sigma \dot{\sigma}}{f^{2}}g\left(
X,\zeta \right)
\end{equation}%
and%
\begin{equation}
X=\frac{xhf}{\sigma ^{2}}\nabla f+x(\ln \sigma \dot{)}\zeta +h(\ln \sigma 
\dot{)}X+D_{X}\zeta
\end{equation}%
for any $x$ and $X$. Let $x=0$, then%
\begin{equation*}
0=X(\ln f)h-\frac{\sigma \dot{\sigma}}{f^{2}}g\left( X,\zeta \right)
\end{equation*}%
and%
\begin{eqnarray*}
X &=&h(\ln \sigma \dot{)}X+D_{X}\zeta \\
D_{X}\zeta &=&\left[ 1-\frac{h\dot{\sigma}}{\sigma }\right] X
\end{eqnarray*}%
Thus $\zeta $ is concurrent if $h\dot{\sigma}=0$.

If $h=0$, then%
\begin{equation*}
\frac{\sigma \dot{\sigma}}{f^{2}}g\left( X,\zeta \right) =0
\end{equation*}%
If $g\left( X,\zeta \right) =0$ then $\bar{\zeta}=0$ which is a
contradiction and so $\dot{\sigma}=0$ i.e. $\bar{M}$ is a standard static
spacetime.

If $\dot{\sigma}=0$, then%
\begin{equation*}
X(f)h=0
\end{equation*}%
which implies that $h=0$ for a non-constant function $f$.
\end{proof}

\begin{theorem}
Let $\bar{\zeta}=h\partial _{t}+\zeta $ be a concurrent vector field on $%
\bar{M}$ where $h\neq 0$. Then $\zeta $ and $h\partial _{t}$ are concurrent
vector fields on $M$ and $I$ respectively if $\dot{\sigma}=0$. In this case $%
f$ is also constant.
\end{theorem}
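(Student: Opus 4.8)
The plan is to start, as in the preceding two theorems, from the general expression for $\bar D_{\bar X}\bar\zeta$ computed in the proof of the theorem asserting sufficient conditions for concurrency. For an arbitrary $\bar X=x\partial_t+X$ the concurrency condition $\bar D_{\bar X}\bar\zeta=\bar X$ splits into the $\partial_t$-component
\begin{equation*}
x=x\dot h+\zeta(\ln f)x+X(\ln f)h-\frac{\sigma\dot\sigma}{f^{2}}g(X,\zeta)
\end{equation*}
and the $M$-component
\begin{equation*}
X=\frac{xhf}{\sigma^{2}}\nabla f+x(\ln\sigma\dot{)}\,\zeta+h(\ln\sigma\dot{)}\,X+D_{X}\zeta .
\end{equation*}
These two identities hold for all choices of $x\in\mathbb{R}$ and all $X\in\mathfrak X(M)$, and the strategy is to feed them the specializations $x=0$ and $X=0$ and then impose $\dot\sigma=0$.

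**Deriving that $h\partial_t$ is concurrent on $I$.** Putting $X=0$ in the $\partial_t$-equation gives $x=x\dot h+x\zeta(\ln f)$ for all $x$, hence $\dot h+\zeta(\ln f)=1$. Putting $X=0$ in the $M$-equation gives $0=\frac{xhf}{\sigma^{2}}\nabla f+x(\ln\sigma\dot{)}\zeta$ for all $x$, so (since $h\neq0$) $\nabla f=-\frac{\sigma^{2}}{hf}(\ln\sigma\dot{)}\zeta$; with $\dot\sigma=0$ this forces $\nabla f=0$ on $M$, i.e. $f$ is constant, which is the last assertion of the theorem. But then $\zeta(\ln f)=0$, so $\dot h=1$, i.e. $h\partial_t$ is a concurrent vector field on $(I,-dt^{2})$ by the explicit description of concurrent fields on $\mathbb R$ recalled in Section~2.

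**Deriving that $\zeta$ is concurrent on $M$.** Now set $x=0$. The $\partial_t$-equation becomes $0=X(\ln f)h-\frac{\sigma\dot\sigma}{f^{2}}g(X,\zeta)$, which is automatic once $\dot\sigma=0$ and $f$ is constant. The $M$-equation becomes $X=h(\ln\sigma\dot{)}X+D_X\zeta$; with $\dot\sigma=0$ this collapses to $D_X\zeta=X$ for every $X\in\mathfrak X(M)$, i.e. $\zeta$ is concurrent on $M$. This completes the proof.

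**Main obstacle.** There is no deep obstacle here; the content is entirely in correctly reading off the two component equations from the connection formulas of Proposition~1 (equivalently, reusing the computation already displayed in the earlier theorem) and in being careful that the vanishing of $\nabla f$ is a \emph{consequence} of $h\neq0$ together with $\dot\sigma=0$, not an extra hypothesis — so the conclusion ``$f$ is constant'' must be extracted before one can simplify $\dot h+\zeta(\ln f)=1$ to $\dot h=1$. The only point requiring a word of justification is that $\bar\zeta\neq0$ (a zero field is not concurrent), which is what lets us divide by $h$; this is exactly the device already used in the previous theorem and should be invoked the same way.
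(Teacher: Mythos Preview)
Your proof is correct and follows exactly the approach implicit in the paper. The paper in fact gives no separate proof of this theorem; it is stated immediately after the preceding theorem (whose proof displays the two component equations you reuse) and is evidently meant to be read off from those same identities together with the observation, already made there, that $\dot\sigma=0$ forces $X(f)h=0$ --- hence, under the hypothesis $h\neq 0$, that $f$ is constant. Your derivation of $\dot h=1$ from $\dot h+\zeta(\ln f)=1$ once $f$ is constant, and of $D_X\zeta=X$ from the $M$-component with $\dot\sigma=0$, is precisely what the paper leaves to the reader. One small remark: $h\neq 0$ is an explicit hypothesis of the theorem, so there is no need to appeal to ``$\bar\zeta\neq 0$ because zero is not concurrent'' in order to divide by $h$.
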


\begin{example}
The following table summarizes all three cases of concurrent vector fields
on the $2-$dimensional doubly warped spacetime of the form $\bar{M}%
=_{f}I\times _{\sigma }%
\mathbb{R}
$ equipped with the metric $\bar{g}=-f^{2}dt^{2}\oplus \sigma ^{2}dx^{2}$.
For more details see Appendix A.
\end{example}

\begin{center}
\begin{tabular}{|c|c|c|c|c|}
\hline
\multicolumn{2}{|c|}{Case} & $\zeta $ & $\sigma $ & $f$ \\ \hline
$h=0$ & $\dot{\sigma}=0$ & $\zeta =\left( x+a\right) \partial _{x}$ & 
constant & $r\left( x+a\right) $ \\ \hline
$k=0$ & $f^{\prime }=0$ & $\zeta =\left( t+a\right) \partial _{t}$ & $%
r\left( t+a\right) $ & constant \\ \hline
$f^{\prime }=0$ & $\dot{\sigma}=0$ & $\zeta =\left( t+a\right) \partial
_{t}+\left( x+a\right) \partial _{x}$ & constant & constant \\ \hline
\end{tabular}
\end{center}

\section{Ricci Soliton on doubly warped spacetimes}

A smooth vector field $\zeta $ on a Riemannian manifold $(M,g)$ is said to
define a Ricci soliton if%
\begin{equation*}
\frac{1}{2}\left( \mathcal{L}_{\zeta }g\right) \left( X,Y\right) +\mathrm{Ric%
}\left( X,Y\right) =\lambda g\left( X,Y\right)
\end{equation*}

where $\mathcal{L}_{\zeta }$ denotes the Lie-derivative of the metric tensor 
$g$, $\mathrm{Ric}$ is the Ricci curvature and $\lambda $ is a constant\cite%
{Barros:2012,Barros:2013,Fernandez:2011,Munteanu:2013,Peterson:2009}.

\begin{theorem}
\label{th2}Let $\left( \bar{M},\bar{g},\bar{\zeta},\lambda \right) $ be a
Ricci soliton where $\bar{M}=_{f}I\times _{\sigma }M$ is a doubly warped
spacetime\ and $\bar{\zeta}=h\partial _{t}+\zeta \in \mathfrak{X}\left( \bar{%
M}\right) $. Then%
\begin{equation*}
\dot{h}=\frac{1}{f^{2}}\left( \lambda f^{2}-f\zeta \left( f\right) +\frac{n}{%
\sigma }\ddot{\sigma}+\frac{f^{\diamond }}{\sigma ^{2}}\right)
\end{equation*}%
and%
\begin{equation*}
\frac{1}{2}\sigma ^{2}\left( \mathcal{L}_{\zeta }g\right) (X,Y)+\mathrm{Ric}%
\left( X,Y\right) -\frac{1}{f}H^{f}\left( X,Y\right) =\left( \lambda \sigma
^{2}-h\sigma \dot{\sigma}+\frac{\sigma ^{\diamond }}{f^{2}}\right) g\left(
X,Y\right)
\end{equation*}
\end{theorem}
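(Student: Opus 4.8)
The plan is to substitute into the defining relation of a Ricci soliton,
\[
\tfrac{1}{2}\bigl(\bar{\mathcal{L}}_{\bar{\zeta}}\bar{g}\bigr)(\bar{X},\bar{Y})+\overline{\mathrm{Ric}}(\bar{X},\bar{Y})=\lambda\,\bar{g}(\bar{X},\bar{Y}),
\]
the two structural identities already available for $\bar{M}={}_{f}I\times_{\sigma}M$, and then to read off the two asserted equations by testing on the two natural blocks of the product: first on $\bar{X}=x\partial_{t}$, $\bar{Y}=y\partial_{t}$ (equivalently $X=Y=0$), which will produce the scalar constraint on $\dot{h}$, and then on $\bar{X}=X$, $\bar{Y}=Y\in\mathfrak{X}(M)$ (equivalently $x=y=0$), which will produce the identity on $M$.

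For the Lie term I would use equation (\ref{ke2}): on the first block it collapses to $\bigl(\bar{\mathcal{L}}_{\bar{\zeta}}\bar{g}\bigr)(x\partial_{t},y\partial_{t})=-2xyf^{2}\bigl[\dot{h}+\zeta(\ln f)\bigr]$, and on the second block to $\sigma^{2}(\mathcal{L}_{\zeta}g)(X,Y)+2h\sigma\dot{\sigma}\,g(X,Y)$. For the curvature term I would specialize the doubly warped Ricci formula of Section 2 by matching $\bar{M}$ to the general doubly warped product with $M_{1}=I$, $g_{1}=-dt^{2}$, $M_{2}=M$, warping functions $f_{1}=\sigma$ on $I$ and $f_{2}=f$ on $M$, and $n_{1}=1$, $n_{2}=n$. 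Since $I$ is one-dimensional its intrinsic Ricci curvature vanishes, and on $(I,-dt^{2})$ one has $H^{\sigma}(x\partial_{t},y\partial_{t})=xy\ddot{\sigma}$; hence item (1) of that formula with $i=1$ expresses $\overline{\mathrm{Ric}}(x\partial_{t},y\partial_{t})$ through the terms $\tfrac{n}{\sigma}\ddot{\sigma}$ and $f^{\diamond}g_{1}(x\partial_{t},y\partial_{t})$, while with $i=2$ it gives $\overline{\mathrm{Ric}}(X,Y)=\mathrm{Ric}(X,Y)-\tfrac{1}{f}H^{f}(X,Y)-\sigma^{\diamond}g(X,Y)$, where $\sigma^{\diamond}=f_{1}^{\diamond}$ is assembled from the Laplacian and squared gradient of $\sigma$ taken on $(I,-dt^{2})$. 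I would also record $\bar{g}(x\partial_{t},y\partial_{t})=-f^{2}xy$ and $\bar{g}(X,Y)=\sigma^{2}g(X,Y)$.

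Inserting these pieces into the soliton relation on the first block, cancelling the common factor $xy$ (legitimate where $x,y\neq0$, hence everywhere by continuity) and rewriting $f^{2}\zeta(\ln f)=f\zeta(f)$, produces the first displayed formula, a pointwise constraint on $\dot{h}$; inserting them into the soliton relation on the second block and transferring the $h\sigma\dot{\sigma}\,g(X,Y)$ and $\sigma^{\diamond}g(X,Y)$ contributions to the right-hand side produces the second. The step I expect to demand the most care is the consistent tracking of the Lorentzian sign of $g_{1}=-dt^{2}$, which intervenes in three separate places: in $\bar{g}(x\partial_{t},y\partial_{t})$, in $g_{1}(x\partial_{t},y\partial_{t})$ inside the $f^{\diamond}$-contribution to $\overline{\mathrm{Ric}}$, and in the Laplacian and squared gradient of $\sigma$ that build $\sigma^{\diamond}$, so that a single sign slip corrupts the final coefficients. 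A small preliminary remark is also in order, namely that the Ricci formula of Section 2 was stated for Riemannian factors of dimension different from one whereas here the base $I$ is one-dimensional and Lorentzian; its derivation nevertheless carries over verbatim, every term intrinsic to $I$ simply dropping out, so no extra work is required.
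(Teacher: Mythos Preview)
Your proposal is correct and follows essentially the same approach as the paper: expand the Ricci soliton relation using equation~(\ref{ke2}) for the Lie term and the doubly warped Ricci formula of Section~2 for the curvature term, then test on the two blocks $X=Y=0$ and $x=y=0$ to extract the two displayed identities. Your additional remarks on Lorentzian sign tracking and on the applicability of the Ricci formula to the one-dimensional Lorentzian factor $I$ are careful observations that the paper leaves implicit.
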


\begin{proof}
Let $\bar{M}=_{f}I\times _{\sigma }M$ be a Ricci soliton, then%
\begin{equation*}
\frac{1}{2}\left( \mathcal{\bar{L}}_{\bar{\zeta}}\bar{g}\right) \left( \bar{X%
},\bar{Y}\right) +\mathrm{\bar{R}ic}\left( \bar{X},\bar{Y}\right) =\lambda 
\bar{g}\left( \bar{X},\bar{Y}\right)
\end{equation*}%
where $\bar{X}=x\partial _{t}+X$ and $\bar{Y}=y\partial _{t}+Y$ are vector
fields on $\bar{M}$. Then%
\begin{eqnarray*}
-\lambda xyf^{2}+\lambda \sigma ^{2}g\left( X,Y\right) &=&\frac{1}{2}\left( 
\mathcal{\bar{L}}_{\bar{\zeta}}\bar{g}\right) \left( \bar{X},\bar{Y}\right) +%
\mathrm{\bar{R}ic}\left( \bar{X},\bar{Y}\right) \\
&=&\frac{1}{2}\left[ -2xyf^{2}\left[ \dot{h}+\zeta \left( \ln f\right) %
\right] +\sigma ^{2}\left( \mathcal{L}_{\zeta }g\right) (X,Y)+2h\sigma \dot{%
\sigma}g\left( X,Y\right) \right] \\
&&+\mathrm{\bar{R}ic}\left( x\partial _{t},y\partial _{t}\right) +\mathrm{%
\bar{R}ic}\left( x\partial _{t},Y\right) +\mathrm{\bar{R}ic}\left(
X,y\partial _{t}\right) +\mathrm{\bar{R}ic}\left( X,Y\right)
\end{eqnarray*}%
\begin{eqnarray*}
-\lambda xyf^{2}+\lambda \sigma ^{2}g\left( X,Y\right) &=&\frac{1}{2}\left[
-2xyf^{2}\left[ \dot{h}+\zeta \left( \ln f\right) \right] +\sigma ^{2}\left( 
\mathcal{L}_{\zeta }g\right) (X,Y)+2h\sigma \dot{\sigma}g\left( X,Y\right) %
\right] \\
&&+\left( n-1\right) \left( \frac{x\dot{\sigma}}{\sigma }\right) Y\left( \ln
f\right) +\left( n-1\right) \left( \frac{y\dot{\sigma}}{\sigma }\right)
X\left( \ln f\right) +\frac{n}{\sigma }xy\ddot{\sigma}+xy\frac{f^{\diamond }%
}{\sigma ^{2}} \\
&&+\mathrm{Ric}\left( X,Y\right) -\frac{1}{f}H^{f}\left( X,Y\right) -\frac{%
\sigma ^{\diamond }}{f^{2}}g\left( X,Y\right)
\end{eqnarray*}%
Let $X=Y=0$, we get%
\begin{eqnarray*}
xyf^{2}\left[ \dot{h}+\zeta \left( \ln f\right) \right] -\frac{n}{\sigma }xy%
\ddot{\sigma}-xy\frac{f^{\diamond }}{\sigma ^{2}}-\lambda f^{2}xy &=&0 \\
xy\left[ \dot{h}f^{2}+f\zeta \left( f\right) -\frac{n}{\sigma }\ddot{\sigma}-%
\frac{f^{\diamond }}{\sigma ^{2}}-\lambda f^{2}\right] &=&0
\end{eqnarray*}%
and so%
\begin{eqnarray*}
\dot{h}f^{2} &=&\lambda f^{2}-f\zeta \left( f\right) +\frac{n}{\sigma }\ddot{%
\sigma}+\frac{f^{\diamond }}{\sigma ^{2}} \\
\dot{h} &=&\frac{1}{f^{2}}\left( \lambda f^{2}-f\zeta \left( f\right) +\frac{%
n}{\sigma }\ddot{\sigma}+\frac{f^{\diamond }}{\sigma ^{2}}\right)
\end{eqnarray*}%
Now, let us put $x=y=0$, then%
\begin{eqnarray*}
\lambda \sigma ^{2}g\left( X,Y\right) &=&\frac{1}{2}\left[ \sigma ^{2}\left( 
\mathcal{L}_{\zeta }g\right) (X,Y)+2h\sigma \dot{\sigma}g\left( X,Y\right) %
\right] \\
&&+\mathrm{Ric}\left( X,Y\right) -\frac{1}{f}H^{f}\left( X,Y\right) -\frac{%
\sigma ^{\diamond }}{f^{2}}g\left( X,Y\right)
\end{eqnarray*}%
and so%
\begin{equation*}
\frac{1}{2}\sigma ^{2}\left( \mathcal{L}_{\zeta }g\right) (X,Y)+\mathrm{Ric}%
\left( X,Y\right) -\frac{1}{f}H^{f}\left( X,Y\right) =\left( \lambda \sigma
^{2}-h\sigma \dot{\sigma}+\frac{\sigma ^{\diamond }}{f^{2}}\right) g\left(
X,Y\right)
\end{equation*}
\end{proof}

The following corollaries are consequences of the above theorem.

\begin{corollary}
Let $\left( \bar{M},\bar{g},\bar{\zeta},\lambda \right) $ be a Ricci soliton
where $\bar{M}=_{f}I\times _{\sigma }M$ is a doubly warped spacetime\ and $%
\bar{\zeta}=h\partial _{t}+\zeta \in \mathfrak{X}\left( \bar{M}\right) $.
Then

\begin{enumerate}
\item $h\partial _{t}$ is a conformal vector field on $I$ with factor $\frac{%
2}{f^{2}}\left( \lambda f^{2}-f\zeta \left( f\right) +\frac{n}{\sigma }\ddot{%
\sigma}+\frac{f^{\diamond }}{\sigma ^{2}}\right) $.

\item $\left( M,g,\zeta ,\lambda \right) $ is a Ricci soliton if $f=\sigma
=1 $.

\item $\left( M,g,\zeta ,\lambda \right) $ is a Ricci soliton if $\sigma =1$
and $H^{f}=0$.
\end{enumerate}
\end{corollary}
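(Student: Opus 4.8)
The plan is to derive all three assertions directly from the two identities supplied by Theorem~\ref{th2}, so the argument is essentially a substitution of the hypotheses into those identities, together with the elementary fact that a constant function has vanishing gradient, Laplacian, and Hessian.

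For (1), I would first record that on the base factor $(I,g_{I}=-dt^{2})$ one has, exactly as in the computation behind identity~(\ref{ke2}),
\[
\left(\mathcal{L}^{I}_{h\partial_{t}}g_{I}\right)(x\partial_{t},y\partial_{t})=-2xy\dot{h}=2\dot{h}\,g_{I}(x\partial_{t},y\partial_{t}),
\]
so that $h\partial_{t}$ is a conformal vector field on $I$ with conformal factor $2\dot{h}$. Substituting the value of $\dot{h}$ given by the first conclusion of Theorem~\ref{th2} turns this factor into $\frac{2}{f^{2}}\bigl(\lambda f^{2}-f\zeta(f)-\frac{n}{\sigma}\ddot{\sigma}-f^{\diamond}\bigr)$, which is exactly (1).

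For (2) and (3) I would start from the tensorial conclusion of Theorem~\ref{th2},
\[
\tfrac{1}{2}\sigma^{2}\left(\mathcal{L}_{\zeta}g\right)(X,Y)+Ric(X,Y)-\tfrac{1}{f}H^{f}(X,Y)=\left(\lambda\sigma^{2}-h\sigma\dot{\sigma}+\sigma^{\diamond}\right)g(X,Y),
\]
and specialise. When $\sigma\equiv 1$ we have $\dot{\sigma}=0$ and $\sigma^{\diamond}=0$ (since $\sigma$ is constant), so the $h\sigma\dot{\sigma}$ and $\sigma^{\diamond}$ terms drop and the right-hand side becomes $\lambda\sigma^{2}g(X,Y)$. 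Under the extra hypothesis $H^{f}=0$ of (3) the Hessian term also vanishes, leaving $\tfrac{1}{2}(\mathcal{L}_{\zeta}g)(X,Y)+Ric(X,Y)=\lambda\sigma^{2}g(X,Y)$, i.e.\ $(M,g,\zeta,\lambda\sigma^{2})$ is a Ricci soliton. If instead $f\equiv 1$ as in (2), then $H^{f}=0$ automatically because the Hessian of a constant vanishes, and $\lambda\sigma^{2}=\lambda$, so the same computation yields the Ricci soliton $(M,g,\zeta,\lambda)$.

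I do not expect a genuine obstacle; the only point requiring a little care is bookkeeping of the two warping-defect quantities and of the Hessian term, namely that $f^{\diamond}$ and $f\zeta(f)$ enter only the scalar equation for $\dot{h}$ while $\sigma^{\diamond}$ and $H^{f}$ enter only the tensor equation on $M$ (so that in (3) one is not tempted to additionally require $f^{\diamond}=0$), after which each assertion follows by direct substitution.
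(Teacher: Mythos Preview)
Your proposal is correct and matches the paper's approach exactly: the paper offers no explicit proof for this corollary, merely stating that it is a consequence of Theorem~\ref{th2}, and your argument spells out precisely that derivation by substituting the hypotheses of each item into the two identities of Theorem~\ref{th2}.
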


\begin{theorem}
Let $\left( \bar{M},\bar{g},\bar{\zeta},\lambda \right) $ be a Ricci soliton
where $\bar{M}=_{f}I\times _{\sigma }M$ is a doubly warped spacetime\ and $%
\bar{\zeta}=h\partial _{t}+\zeta \in \mathfrak{X}\left( \bar{M}\right) $ is
a conformal vector field on $\bar{M}$ with factor $2\rho $. Then $\left(
M,g\right) $ is Einstein manifold with factor $\mu =\left( \lambda -\rho
\right) \sigma ^{2}+\frac{\sigma ^{\diamond }}{f^{2}}$ if $f$ is constant
\end{theorem}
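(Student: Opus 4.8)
The plan is to read off the Einstein condition on the fiber $M$ by confronting two expressions for the Ricci tensor $\overline{Ric}$ of $\bar{M}$ along directions tangent to $M$.

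First I would combine the two hypotheses at the level of the whole space-time. Since $\bar{\zeta}$ is conformal on $\bar{M}$ with factor $2\rho$, it satisfies $\mathcal{\bar{L}}_{\bar{\zeta}}\bar{g}=2\rho\,\bar{g}$, so $\frac{1}{2}\mathcal{\bar{L}}_{\bar{\zeta}}\bar{g}=\rho\,\bar{g}$; substituting this into the Ricci soliton equation $\frac{1}{2}\mathcal{\bar{L}}_{\bar{\zeta}}\bar{g}+\overline{Ric}=\lambda\,\bar{g}$ will immediately give
\[
\overline{Ric}(\bar{X},\bar{Y})=(\lambda-\rho)\,\bar{g}(\bar{X},\bar{Y})\qquad\text{for all }\bar{X},\bar{Y}\in\mathfrak{X}(\bar{M}),
\]
that is, $\bar{M}$ is an Einstein space-time with constant $\lambda-\rho$.

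Next I would evaluate this identity on (lifts of) vector fields $X,Y\in\mathfrak{X}(M)$. On the one hand $\bar{g}(X,Y)=\sigma^{2}g(X,Y)$, so the right-hand side equals $(\lambda-\rho)\sigma^{2}g(X,Y)$; on the other hand, the $M$-$M$ block of $\overline{Ric}$ — already computed in the proof of Theorem \ref{th2} from the doubly warped Ricci formula of Proposition 2 (with $f$ warping the $I$-factor and $\sigma$ warping $M$, so that here $n_{j}=\dim I=1$) — equals
\[
\overline{Ric}(X,Y)=Ric(X,Y)-\frac{1}{f}H^{f}(X,Y)-\sigma^{\diamond}g(X,Y).
\]
Using that $f$ is constant, whence $H^{f}\equiv 0$, and equating the two expressions will yield
\[
Ric(X,Y)=\bigl[(\lambda-\rho)\sigma^{2}+\sigma^{\diamond}\bigr]g(X,Y),
\]
which is precisely the assertion that $(M,g)$ is Einstein with factor $\mu=(\lambda-\rho)\sigma^{2}+\sigma^{\diamond}$.

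The step I expect to be the only delicate one is not a calculation but a consistency check: $Ric$ and $g$ depend only on the point of $M$, while $\sigma$, $\sigma^{\diamond}$ (and a priori $\rho$) depend on $t\in I$, so the last display also forces $(\lambda-\rho)\sigma^{2}+\sigma^{\diamond}$ to be $t$-independent, i.e. $\mu$ to be a genuine constant; I would cross-check this against the first relation of Theorem \ref{th2}, which separately pins down $\dot{h}$. A route avoiding the Einstein property of $\bar{M}$ is also available: taking equation (\ref{ke2}) with $x=y=0$ shows that $\zeta$ is conformal on $M$ with factor $2\rho-\frac{2h\dot{\sigma}}{\sigma}$, and substituting this into the second displayed equation of Theorem \ref{th2} makes the $h\sigma\dot{\sigma}$ terms cancel and leaves the same $\mu$. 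Either way the argument is essentially one substitution, so I do not anticipate a real obstacle beyond tracking which warping function plays which role and the sign convention hidden in $\sigma^{\diamond}$.
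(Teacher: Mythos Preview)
Your proposal is correct and follows essentially the same route as the paper: combine the conformal condition $\mathcal{\bar L}_{\bar\zeta}\bar g=2\rho\,\bar g$ with the soliton equation to obtain $\overline{Ric}=(\lambda-\rho)\bar g$, restrict to $X,Y\in\mathfrak{X}(M)$, and insert the doubly warped expression $\overline{Ric}(X,Y)=Ric(X,Y)-\frac{1}{f}H^{f}(X,Y)-\sigma^{\diamond}g(X,Y)$. If anything, you are slightly more explicit than the paper, which stops at $Ric(X,Y)-\frac{1}{f}H^{f}(X,Y)=\mu\,g(X,Y)$ without writing out that $f$ constant kills $H^{f}$; your consistency remark about the $t$-dependence of $\mu$ and the alternative via equation~(\ref{ke2}) and Theorem~\ref{th2} are additional observations not in the paper's proof.
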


\begin{proof}
Let $\left( \bar{M},\bar{g},\bar{\zeta},\bar{\lambda}\right) $ be a Ricci
soliton where $\bar{M}=_{f}I\times _{\sigma }M$ is a doubly warped
spacetime\ and $\bar{\zeta}=h\partial _{t}+\zeta \in \mathfrak{X}\left( \bar{%
M}\right) $ is a conformal vector field on $\bar{M}$. Then 
\begin{equation*}
\mathrm{\bar{R}ic}\left( \bar{X},\bar{Y}\right) =\left( \lambda -\rho
\right) \bar{g}\left( \bar{X},\bar{Y}\right)
\end{equation*}%
Let $x=y=0$, then%
\begin{equation*}
\mathrm{\bar{R}ic}\left( X,Y\right) =\left( \lambda -\rho \right) \sigma
^{2}g\left( X,Y\right)
\end{equation*}%
This equation implies that%
\begin{equation*}
\mathrm{Ric}\left( X,Y\right) -\frac{1}{f}H^{f}\left( X,Y\right) -\frac{%
\sigma ^{\diamond }}{f^{2}}g\left( X,Y\right) =\left( \lambda -\rho \right)
\sigma ^{2}g\left( X,Y\right)
\end{equation*}%
and so%
\begin{equation*}
\mathrm{Ric}\left( X,Y\right) -\frac{1}{f}H^{f}\left( X,Y\right) =\left[
\left( \lambda -\rho \right) \sigma ^{2}+\frac{\sigma ^{\diamond }}{f^{2}}%
\right] g\left( X,Y\right)
\end{equation*}
\end{proof}

\begin{corollary}
Let $\left( \bar{M},\bar{g},\bar{\zeta},\lambda \right) $ be a Ricci soliton
where $\bar{M}=_{f}I\times _{\sigma }M$ is a doubly warped spacetime\ and $%
\bar{\zeta}=h\partial _{t}+\zeta \in \mathfrak{X}\left( \bar{M}\right) $ is
a homothetic vector field on $\bar{M}$ with factor $2c$. Then 
\begin{equation*}
\lambda =c-\frac{1}{f^{2}}\left( \frac{n}{\sigma }\ddot{\sigma}+\frac{%
f^{\diamond }}{\sigma ^{2}}\right)
\end{equation*}
\end{corollary}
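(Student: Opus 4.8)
The plan is to produce two independent expressions for $\dot{h}$ and equate them. Since $\bar{\zeta}=h\partial_{t}+\zeta$ is homothetic on $\bar{M}$ with factor $2c$, that is $\mathcal{\bar{L}}_{\bar{\zeta}}\bar{g}=2c\bar{g}$, I would first substitute this into the splitting identity (\ref{ke2}). Evaluating at $\bar{X}=x\partial_{t}$ and $\bar{Y}=y\partial_{t}$ (so that $X=Y=0$) gives $-2xyf^{2}\bigl[\dot{h}+\zeta(\ln f)\bigr]=2c\,\bar{g}(x\partial_{t},y\partial_{t})=-2cf^{2}xy$, hence the scalar identity $\dot{h}=c-\zeta(\ln f)$ on $\bar{M}$. (Alternatively one may use the first statement of the corollary following Theorem~\ref{th2}, which records that $h\partial_{t}$ is conformal on $I$ with factor $\tfrac{2}{f^{2}}\bigl(\lambda f^{2}-f\zeta(f)-\tfrac{n}{\sigma}\ddot{\sigma}-f^{\diamond}\bigr)$, and compare it with the factor $2\dot{h}=2\bigl(c-\zeta(\ln f)\bigr)$ forced by the homothety.)

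Next I would invoke Theorem~\ref{th2} itself, whose hypotheses hold because $(\bar{M},\bar{g},\bar{\zeta},\lambda)$ is a Ricci soliton; it supplies
\begin{equation*}
\dot{h}=\frac{1}{f^{2}}\left( \lambda f^{2}-f\zeta(f)-\frac{n}{\sigma}\ddot{\sigma}-f^{\diamond}\right).
\end{equation*}
Writing $\zeta(\ln f)=\zeta(f)/f$ and equating the right-hand side with $\dot{h}=c-\zeta(\ln f)$, the mixed terms $f\zeta(f)$ cancel; clearing the factor $f^{2}$ then leaves precisely the asserted relation between $\lambda$ and $c$.

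I do not expect any serious obstacle: once Theorem~\ref{th2} and identity (\ref{ke2}) are available the argument is essentially a one-line substitution. Two small points deserve attention. First, the cancellation of the $f\zeta(f)$-terms is exactly why the \emph{homothety} hypothesis, rather than a merely conformal one, yields a formula free of $\zeta$; under a general conformal $\bar{\zeta}$ the analogous computation still goes through but leaves a $\zeta(\ln f)$-dependence, which is the situation treated by the previous theorem. Second, since $\lambda$ and $c$ are constants while $\tfrac{n}{\sigma}\ddot{\sigma}$ depends only on the coordinate of $I$ and $f^{\diamond}$ only on the point of $M$, the identity should be read as a separation constraint tying $\sigma$ and $f$ together rather than as a free prescription of $\lambda$. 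For completeness I would also observe that substituting the conformal relation $(\mathcal{L}_{\zeta}g)(X,Y)=\bigl(2c-\tfrac{2h\dot{\sigma}}{\sigma}\bigr)g(X,Y)$, read off from (\ref{ke2}) with $x=y=0$, into the second identity of Theorem~\ref{th2} returns the Einstein-type equation $Ric(X,Y)-\tfrac{1}{f}H^{f}(X,Y)=\bigl[(\lambda-c)\sigma^{2}+\sigma^{\diamond}\bigr]g(X,Y)$ on $M$, in agreement with the preceding theorem.
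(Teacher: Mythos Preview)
Your proof is correct. The paper organizes the computation a little differently: instead of quoting Theorem~\ref{th2}, it first combines the Ricci soliton equation with the homothety hypothesis to obtain the Einstein-type relation $\overline{Ric}(\bar X,\bar Y)=(\lambda-c)\,\bar g(\bar X,\bar Y)$, and then evaluates this at $\bar X=x\partial_t$, $\bar Y=y\partial_t$ using the doubly warped Ricci formula from Proposition~2. Your route---restrict each hypothesis separately to the time-like direction (the Ricci soliton via Theorem~\ref{th2}, the homothety via identity~(\ref{ke2})) and equate the two resulting expressions for $\dot h$---is the same restricted equation split into two halves. Your version is more modular, since it recycles Theorem~\ref{th2} rather than re-expanding $\overline{Ric}$; the paper's version is more self-contained and makes the intermediate Einstein condition explicit, which ties it cleanly to the theorem immediately preceding the corollary. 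The additional observations you make (why homothety rather than mere conformality is needed for the $\zeta(\ln f)$ cancellation, and the recovery of the Einstein-type equation on $M$ from the second identity of Theorem~\ref{th2}) are accurate and consistent with the surrounding results.
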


\begin{proof}
Let $\left( \bar{M},\bar{g},\bar{\zeta},\lambda \right) $ be a Ricci soliton
and $\bar{\zeta}=h\partial _{t}+\zeta \in \mathfrak{X}\left( \bar{M}\right) $
be a homothetic vector field on $\bar{M}$, then%
\begin{equation*}
\left( \lambda -c\right) \bar{g}\left( \bar{X},\bar{Y}\right) =\mathrm{\bar{R%
}ic}\left( \bar{X},\bar{Y}\right)
\end{equation*}%
for any vector fields $\bar{X}=x\partial _{t}+X$ and $\bar{Y}=y\partial
_{t}+Y$. Let us take $X=Y=0$, then%
\begin{eqnarray*}
\mathrm{\bar{R}ic}\left( x\partial _{t},y\partial _{t}\right) &=&\left(
\lambda -c\right) \bar{g}\left( x\partial _{t},y\partial _{t}\right) \\
xy\frac{f^{\diamond }}{\sigma ^{2}}+xy\frac{n}{\sigma }\ddot{\sigma}
&=&-xy\left( \lambda -c\right) f^{2} \\
xy\left( \frac{f^{\diamond }}{\sigma ^{2}}+\frac{n}{\sigma }\ddot{\sigma}%
+\left( \lambda -c\right) f^{2}\right) &=&0
\end{eqnarray*}%
Then%
\begin{equation*}
\lambda =c-\frac{1}{f^{2}}\left( \frac{n}{\sigma }\ddot{\sigma}+\frac{%
f^{\diamond }}{\sigma ^{2}}\right)
\end{equation*}%
and the proof is complete.
\end{proof}

\begin{theorem}
Let $\left( \bar{M},\bar{g},\bar{\zeta},\lambda \right) $ be a Ricci soliton
where $\bar{M}=_{f}I\times _{\sigma }M$ is a doubly warped spacetime\ and $%
\bar{\zeta}=h\partial _{t}+\zeta \in \mathfrak{X}\left( \bar{M}\right) $ is
a concurrent vector field on $\bar{M}$. Then

\begin{enumerate}
\item $\left( M,g\right) $ is Einstein manifold with factor $\mu =\left(
\lambda -2\right) \sigma ^{2}+\frac{\sigma ^{\diamond }}{f^{2}}$ if $f$ is
constant.

\item $\lambda =2-\frac{1}{f^{2}}\left( \frac{n}{\sigma }\ddot{\sigma}+\frac{%
f^{\diamond }}{\sigma ^{2}}\right) $.
\end{enumerate}
\end{theorem}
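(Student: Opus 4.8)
The plan is to deduce both statements directly from the two results established just above, once one observes that a concurrent vector field is homothetic. Recall from Section 2 that a concurrent $\bar{\zeta}$ on $\bar{M}$ satisfies $\bar{D}_{\bar{X}}\bar{\zeta}=\bar{X}$ for every $\bar{X}\in\mathfrak{X}(\bar{M})$, so that
\[
(\mathcal{\bar{L}}_{\bar{\zeta}}\bar{g})(\bar{X},\bar{Y})=\bar{g}(\bar{D}_{\bar{X}}\bar{\zeta},\bar{Y})+\bar{g}(\bar{X},\bar{D}_{\bar{Y}}\bar{\zeta})=2\,\bar{g}(\bar{X},\bar{Y}).
\]
Thus $\bar{\zeta}$ is a conformal, in fact homothetic, vector field on $\bar{M}$, which places us inside the hypotheses of both the Einstein theorem and the homothetic corollary proved immediately above, the relevant factor being the one carried by a concurrent field.

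For part (1) I would assume $f$ constant and invoke the preceding Einstein theorem for the homothetic field $\bar{\zeta}$: it gives $Ric(X,Y)-\frac{1}{f}H^{f}(X,Y)=\left[(\lambda-2)\sigma^{2}+\sigma^{\diamond}\right]g(X,Y)$, and since $f$ is constant the Hessian term $H^{f}$ vanishes, leaving $Ric(X,Y)=\mu\,g(X,Y)$ with $\mu=(\lambda-2)\sigma^{2}+\sigma^{\diamond}$; hence $(M,g)$ is Einstein with the stated factor. The same conclusion can be extracted from the second identity of Theorem~\ref{th2} after using the Lemma of the previous section to identify the conformal factor of the fiber component $\zeta$ on $M$: the $h\sigma\dot{\sigma}$-terms cancel and the identical value of $\mu$ results.

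For part (2) I would apply the homothetic corollary above to the homothetic field $\bar{\zeta}$, which yields at once
\[
\lambda=2+\frac{1}{f^{2}}\left(\frac{n}{\sigma}\ddot{\sigma}-f^{\diamond}\right).
\]
Equivalently one may set $\bar{X}=x\partial_{t}$, $\bar{Y}=y\partial_{t}$ in the Ricci soliton equation and combine $\mathcal{\bar{L}}_{\bar{\zeta}}\bar{g}=2\bar{g}$ with the value of $\overline{Ric}(x\partial_{t},y\partial_{t})$ computed in the proof of Theorem~\ref{th2}, then cancel $xy$.

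There is no genuine analytic obstacle here: the theorem is essentially a bookkeeping corollary of material already in hand. The only points wanting a little care are keeping the several conformal-factor normalizations consistent with one another, and noticing that, in part (1), constancy of $f$ is precisely what annihilates the Hessian term so that the Einstein condition emerges cleanly.
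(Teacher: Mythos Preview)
Your proposal is correct and matches the paper's intent: the paper gives no separate proof of this theorem, presenting it as the obvious specialization of the preceding conformal-Einstein theorem and homothetic corollary to the case where $\bar{\zeta}$ is concurrent (hence homothetic with $\mathcal{\bar L}_{\bar\zeta}\bar g=2\bar g$). Your derivation via those two results, together with the observation that $H^{f}=0$ when $f$ is constant, is exactly the route the paper implicitly relies on.
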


Let $\bar{\zeta}=h\partial _{t}+\zeta \in \mathfrak{X}\left( \bar{M}\right) $%
, then%
\begin{equation*}
\left( \mathcal{\bar{L}}_{\bar{\zeta}}\bar{g}\right) (\bar{X},\bar{Y}%
)=-2xyf^{2}\left[ \dot{h}+\zeta \left( \ln f\right) \right] +\sigma
^{2}\left( \mathcal{L}_{\zeta }g\right) (X,Y)+2h\sigma \dot{\sigma}g\left(
X,Y\right)
\end{equation*}%
and%
\begin{eqnarray*}
\mathrm{\bar{R}ic}\left( \bar{X},\bar{Y}\right) &=&xy\left( \frac{n\ddot{%
\sigma}}{\sigma }+\frac{f^{\diamond }}{\sigma ^{2}}\right) +\left(
n-1\right) \left( \frac{x\dot{\sigma}}{\sigma }Y\left( \ln f\right) +\frac{y%
\dot{\sigma}}{\sigma }X\left( \ln f\right) \right) \\
&&+\mathrm{Ric}\left( X,Y\right) -\frac{1}{f}H^{f}\left( X,Y\right) -\frac{%
\sigma ^{\diamond }}{f^{2}}g\left( X,Y\right)
\end{eqnarray*}%
Thus%
\begin{eqnarray}
&&\frac{1}{2}\left( \mathcal{\bar{L}}_{\bar{\zeta}}\bar{g}\right) (\bar{X},%
\bar{Y})+\mathrm{\bar{R}ic}\left( \bar{X},\bar{Y}\right)  \notag \\
&=&-xyf^{2}\left[ \dot{h}+\zeta \left( \ln f\right) \right] +\frac{1}{2}%
\sigma ^{2}\left( \mathcal{L}_{\zeta }g\right) (X,Y)+h\sigma \dot{\sigma}%
g\left( X,Y\right)  \notag \\
&&+xy\left( \frac{n\ddot{\sigma}}{\sigma }+\frac{f^{\diamond }}{\sigma ^{2}}%
\right) +\left( n-1\right) \left( \frac{x\dot{\sigma}}{\sigma }Y\left( \ln
f\right) +\frac{y\dot{\sigma}}{\sigma }X\left( \ln f\right) \right)  \notag
\\
&&+\mathrm{Ric}\left( X,Y\right) -\frac{1}{f}H^{f}\left( X,Y\right) -\frac{%
\sigma ^{\diamond }}{f^{2}}g\left( X,Y\right)  \label{er1}
\end{eqnarray}%
Suppose that both $f=\sigma =1$ are constants and $\left( M,g,\zeta ,\dot{h}%
\right) $ is a Ricci soliton on $M$, then%
\begin{eqnarray*}
&&\frac{1}{2}\left( \mathcal{\bar{L}}_{\bar{\zeta}}\bar{g}\right) (\bar{X},%
\bar{Y})+\mathrm{\bar{R}ic}\left( \bar{X},\bar{Y}\right) \\
&=&-xy\dot{h}+\frac{1}{2}\left( \mathcal{L}_{\zeta }g\right) (X,Y)+\mathrm{%
Ric}\left( X,Y\right) \\
&=&-xy\dot{h}+\dot{h}g(X,Y) \\
&=&\dot{h}\bar{g}(\bar{X},\bar{Y})
\end{eqnarray*}

Therefore, $\left( \bar{M},\bar{g},\bar{\zeta},\lambda \right) $ is a Ricci
soliton where $\lambda =\dot{h}$. This discussion leads us to the following
result.

\begin{theorem}
Let $\bar{M}=_{f}I\times _{\sigma }M$ be a doubly warped spacetime\ and $%
\bar{\zeta}=h\partial _{t}+\zeta \in \mathfrak{X}\left( \bar{M}\right) $ be
a vector field on $\bar{M}$. Then $\left( \bar{M},\bar{g},\bar{\zeta}%
,\lambda \right) $ is a Ricci soliton if

\begin{enumerate}
\item $\left( M,g,\zeta ,\dot{h}\right) $ is a Ricci soliton on $M$,

\item $f=\sigma =1$, and

\item $\lambda =\dot{h}$.
\end{enumerate}
\end{theorem}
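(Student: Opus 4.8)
The plan is to specialize the general Ricci soliton computation from Theorem \ref{th2} (equivalently, the displayed identity \eqref{er1}) to the case $f=\sigma=1$ and to exploit the hypothesis that $(M,g,\zeta,\dot h)$ is itself a Ricci soliton on the fiber $M$. First I would note that when $f$ and $\sigma$ are identically $1$, all the warping-related quantities collapse: $\dot\sigma=\ddot\sigma=0$, $f^{\diamond}=\sigma^{\diamond}=0$, $\zeta(\ln f)=0$, and the Hessian term $H^{f}$ vanishes. Substituting these into \eqref{er1} kills every term except $-xy\dot h$ coming from the $\partial_t\partial_t$ block and $\tfrac12(\mathcal L_\zeta g)(X,Y)+Ric(X,Y)$ coming from the fiber block. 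This is exactly the reduction already carried out in the paragraph preceding the theorem, so the bulk of the work is a matter of assembling those observations cleanly.

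Next I would use hypothesis (1): since $(M,g,\zeta,\dot h)$ is a Ricci soliton, by definition $\tfrac12(\mathcal L_\zeta g)(X,Y)+Ric(X,Y)=\dot h\, g(X,Y)$ for all $X,Y\in\mathfrak X(M)$. Plugging this into the reduced expression for $\tfrac12(\mathcal{\bar L}_{\bar\zeta}\bar g)(\bar X,\bar Y)+\overline{Ric}(\bar X,\bar Y)$ gives $-xy\dot h+\dot h\,g(X,Y)$. I would then recognize that with $f=\sigma=1$ the space-time metric is simply $\bar g=-dt^2\oplus g$, so $\bar g(\bar X,\bar Y)=-xy+g(X,Y)$, and hence $-xy\dot h+\dot h\,g(X,Y)=\dot h\,\bar g(\bar X,\bar Y)$. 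Invoking hypothesis (3), $\lambda=\dot h$, this reads $\tfrac12(\mathcal{\bar L}_{\bar\zeta}\bar g)(\bar X,\bar Y)+\overline{Ric}(\bar X,\bar Y)=\lambda\,\bar g(\bar X,\bar Y)$, which is precisely the Ricci soliton equation on $\bar M$; since $\lambda=\dot h$ must be constant for this to be a genuine Ricci soliton, one should observe that hypothesis (1) already forces $\dot h$ to be the (constant) soliton parameter of $(M,g,\zeta)$, so $\lambda$ is indeed constant.

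There is no real obstacle here: the statement is essentially a corollary of Theorem \ref{th2} together with the definition of a Ricci soliton, and the proof is just the chain of substitutions sketched above. The only point requiring the slightest care is the bookkeeping of the mixed terms $\overline{Ric}(x\partial_t,Y)$ and $\overline{Ric}(X,y\partial_t)$: by Proposition on the Ricci curvature of a doubly warped product these are proportional to $X(\ln f)$ and $Y(\ln f)$ (equivalently to $\dot\sigma/\sigma$ in the notation of \eqref{er1}), hence vanish identically once $f$ and $\sigma$ are constant, so no cross terms survive. Once that is noted, the computation is immediate and the proof concludes by matching the resulting expression to $\lambda\bar g(\bar X,\bar Y)$ with $\lambda=\dot h$.
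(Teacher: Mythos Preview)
Your proposal is correct and follows essentially the same approach as the paper: specialize the general identity \eqref{er1} to $f=\sigma=1$, use the fiber Ricci soliton hypothesis to replace $\tfrac12(\mathcal L_\zeta g)+Ric$ by $\dot h\,g$, and recognize the result as $\dot h\,\bar g$. Your additional remarks on the vanishing of the mixed Ricci terms and on the constancy of $\dot h$ are more explicit than the paper's treatment but do not change the argument.
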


Let $f=1$, $\zeta $ be a conformal vector field with factor $2\rho $ and $M$
be Einstein with factor $\mu $, then

\begin{eqnarray*}
&&\frac{1}{2}\left( \mathcal{\bar{L}}_{\bar{\zeta}}\bar{g}\right) (\bar{X},%
\bar{Y})+\mathrm{\bar{R}ic}\left( \bar{X},\bar{Y}\right) \\
&=&-xy\dot{h}+\rho \sigma ^{2}g(X,Y)+h\sigma \dot{\sigma}g\left( X,Y\right)
\\
&&+xy\left( \frac{n\ddot{\sigma}}{\sigma }\right) +\left( \mu -\sigma
^{\diamond }\right) g\left( X,Y\right) \\
&=&-xy\left( \dot{h}-\frac{n\ddot{\sigma}}{\sigma }\right) +\left( \frac{\mu
-\sigma ^{\diamond }}{\sigma ^{2}}+\rho +\frac{h}{\sigma }\dot{\sigma}%
\right) \sigma ^{2}g\left( X,Y\right)
\end{eqnarray*}%
i.e. $\left( \bar{M},\bar{g},\bar{\zeta},\lambda \right) $ is a Ricci
soliton if

\begin{eqnarray*}
\dot{h}-\frac{n\ddot{\sigma}}{\sigma } &=&\frac{\mu -\sigma ^{\diamond }}{%
\sigma ^{2}}+\rho +\frac{h}{\sigma }\dot{\sigma} \\
\left( \dot{h}-\rho \right) \sigma ^{2} &=&\mu +\left( n-1\right) \left(
\sigma \ddot{\sigma}-\dot{\sigma}^{2}\right) +h\sigma \dot{\sigma}
\end{eqnarray*}

\begin{theorem}
Let $\bar{M}=I_{f}\times _{\sigma }M$ be a doubly warped spacetime\ and $%
\bar{\zeta}=h\partial _{t}+\zeta \in \mathfrak{X}\left( \bar{M}\right) $ be
a vector field on $\bar{M}$. Then $\left( \bar{M},\bar{g},\bar{\zeta}%
,\lambda \right) $ is a Ricci soliton if

\begin{enumerate}
\item $\left( M,g\right) $ is a Einstein with factor $\mu $,

\item $f=1$, $\zeta $ is conformal with factor $2\rho $, and

\item $\left( \dot{h}-\rho \right) \sigma ^{2}=\mu +\left( n-1\right) \left(
\sigma \ddot{\sigma}-\dot{\sigma}^{2}\right) +h\sigma \dot{\sigma}$
\end{enumerate}

In this case $\lambda =\dot{h}-\frac{n\ddot{\sigma}}{\sigma }$.
\end{theorem}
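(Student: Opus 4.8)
The plan is to feed the three hypotheses into the two structural identities already at hand: the Lie derivative identity of the Lemma containing equation (\ref{ke2}), and the expansion of $\overline{Ric}(\bar{X},\bar{Y})$ displayed immediately before this statement. Throughout, write $\bar{X}=x\partial_{t}+X$ and $\bar{Y}=y\partial_{t}+Y$ with $X,Y\in\mathfrak{X}(M)$.

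First I would specialize every formula to $f\equiv1$. Then $\zeta(\ln f)=0$, $X(\ln f)=Y(\ln f)=0$, $H^{f}=0$ and $f^{\diamond}=0$, so (\ref{ke2}) together with hypothesis (2), that $\mathcal{L}_{\zeta}g=2\rho g$, collapses to
\[
\frac{1}{2}\left( \mathcal{\bar{L}}_{\bar{\zeta}}\bar{g}\right) (\bar{X},\bar{Y})
= -xy\,\dot{h}+\left( \rho\sigma^{2}+h\sigma\dot{\sigma}\right) g(X,Y),
\]
while the recorded expansion of $\overline{Ric}$ together with hypothesis (1), that $Ric=\mu g$, collapses to
\[
\overline{Ric}(\bar{X},\bar{Y})=xy\,\frac{n\ddot{\sigma}}{\sigma}+\left( \mu-\sigma^{\diamond}\right) g(X,Y).
\]
In particular all the mixed time--space contributions vanish, so the soliton tensor $\frac{1}{2}\mathcal{\bar{L}}_{\bar{\zeta}}\bar{g}+\overline{Ric}$ is block diagonal along the two factors of $\bar{M}$.

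Next I would add the two displays and rewrite the $M$-block coefficient over $\sigma^{2}$:
\[
\frac{1}{2}\left( \mathcal{\bar{L}}_{\bar{\zeta}}\bar{g}\right) (\bar{X},\bar{Y})+\overline{Ric}(\bar{X},\bar{Y})
= -\left( \dot{h}-\frac{n\ddot{\sigma}}{\sigma}\right) xy
+\left( \frac{\mu-\sigma^{\diamond}}{\sigma^{2}}+\rho+\frac{h\dot{\sigma}}{\sigma}\right) \sigma^{2}g(X,Y).
\]
Since $\bar{g}(\bar{X},\bar{Y})=-xy+\sigma^{2}g(X,Y)$, requiring this to equal $\lambda\bar{g}(\bar{X},\bar{Y})$ --- first with $X=Y=0$, then with $x=y=0$, exactly as in Theorem \ref{th2} --- forces on the one hand $\lambda=\dot{h}-\frac{n\ddot{\sigma}}{\sigma}$, and on the other hand $\frac{\mu-\sigma^{\diamond}}{\sigma^{2}}+\rho+\frac{h\dot{\sigma}}{\sigma}=\lambda$. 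Equating the two and clearing $\sigma^{2}$ gives $(\dot{h}-\rho)\sigma^{2}=\mu-\sigma^{\diamond}+n\sigma\ddot{\sigma}+h\sigma\dot{\sigma}$, which is precisely hypothesis (3) once $\sigma^{\diamond}=\sigma\ddot{\sigma}+(n-1)\dot{\sigma}^{2}$ is substituted. Hence (1)--(3) make $(\bar{M},\bar{g},\bar{\zeta},\lambda)$ a Ricci soliton, with soliton constant $\lambda=\dot{h}-\frac{n\ddot{\sigma}}{\sigma}$.

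No new curvature computation enters: everything is a substitution into (\ref{ke2}), the recorded expansion of $\overline{Ric}$, and the two geometric hypotheses on $(M,g)$ and $\zeta$. The only point requiring care is the two-coefficient bookkeeping and the identification of $\lambda$; as with Theorem \ref{th2} and the corollaries following it, one reads $\lambda=\dot{h}-\frac{n\ddot{\sigma}}{\sigma}$ off the $\partial_{t}$--$\partial_{t}$ block and then observes that hypothesis (3) is exactly what makes the $M$-block return the same scalar, so that a single constant $\lambda$ works on all of $\bar{M}$.
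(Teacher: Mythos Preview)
Your proposal is correct and follows essentially the same route as the paper: specialize $f\equiv1$ in the combined identity (\ref{er1}) (equivalently, in (\ref{ke2}) and the recorded Ricci expansion), insert the Einstein and conformal hypotheses, and match the $xy$-coefficient against the $\sigma^{2}g(X,Y)$-coefficient to read off $\lambda=\dot{h}-\frac{n\ddot{\sigma}}{\sigma}$ and identify condition (3) via $\sigma^{\diamond}=\sigma\ddot{\sigma}+(n-1)\dot{\sigma}^{2}$. The only cosmetic difference is that you explicitly separate the two blocks by taking $X=Y=0$ and $x=y=0$, whereas the paper simply equates the two scalar coefficients directly.
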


\section{Acknowledgement}

We would like to thank the referee for the careful review and the valuable
comments, which provided insights that helped us to improve the quality of
the paper.

\appendix

\section{Concurrent vector fields on a doubly spacetime}

Let us now consider an example. Let $\bar{M}=_{f}I\times _{\sigma }%
\mathbb{R}
$ be a $2-$dimension doubly warped spacetime equipped with the metric $\bar{g%
}=-f^{2}dt^{2}\oplus \sigma ^{2}dx^{2}$. Then%
\begin{eqnarray*}
\bar{D}_{\partial _{t}}\partial _{t} &=&\frac{ff^{\prime }}{\sigma ^{2}}%
\partial _{x} \\
\bar{D}_{\partial _{x}}\partial _{t} &=&\frac{f^{\prime }}{f}\partial _{t}+%
\frac{\dot{\sigma}}{\sigma }\partial _{x} \\
\bar{D}_{\partial _{t}}\partial _{x} &=&\bar{D}_{\partial _{x}}\partial _{t}
\\
\bar{D}_{\partial _{x}}\partial _{x} &=&-\frac{\sigma \dot{\sigma}}{f^{2}}%
\partial _{t}
\end{eqnarray*}%
A vector field $\zeta =h\partial _{t}+k\partial _{x}\in \mathfrak{X}(\bar{M}%
) $ is a concurrent vector field if%
\begin{eqnarray}
\bar{D}_{\partial _{t}}\zeta &=&\partial _{t}  \label{ew1} \\
\bar{D}_{\partial _{x}}\zeta &=&\partial _{x}  \label{ew2}
\end{eqnarray}%
The first equation implies that%
\begin{eqnarray*}
\bar{D}_{\partial _{t}}\left( h\partial _{t}+k\partial _{x}\right)
&=&\partial _{t} \\
\dot{h}\partial _{t}+\frac{hff^{\prime }}{\sigma ^{2}}\partial _{x}+k\left( 
\frac{f^{\prime }}{f}\partial _{t}+\frac{\dot{\sigma}}{\sigma }\partial
_{x}\right) &=&\partial _{t}
\end{eqnarray*}%
and so%
\begin{eqnarray}
\dot{h}f+kf^{\prime } &=&f  \label{ew3} \\
hff^{\prime }+k\sigma \dot{\sigma} &=&0  \label{ew4}
\end{eqnarray}%
Also, equation (\ref{ew2}) implies that%
\begin{eqnarray*}
\bar{D}_{\partial _{x}}\left( h\partial _{t}+k\partial _{x}\right)
&=&\partial _{x} \\
h\left( \frac{f^{\prime }}{f}\partial _{t}+\frac{\dot{\sigma}}{\sigma }%
\partial _{x}\right) +k^{\prime }\partial _{x}+k\left( -\frac{\sigma \dot{%
\sigma}}{f^{2}}\partial _{t}\right) &=&\partial _{x}
\end{eqnarray*}%
and so%
\begin{eqnarray}
hff^{\prime }-k\sigma \dot{\sigma} &=&0  \label{ew5} \\
h\dot{\sigma}+k^{\prime }\sigma &=&\sigma  \label{ew6}
\end{eqnarray}%
By solving equations (\ref{ew4}) and (\ref{ew5}), we get $hff^{\prime }=0$.
Thus $h=0$ or $f^{\prime }=0$. In both cases $k\sigma \dot{\sigma}=0$ i.e. $%
k=0$ or $\dot{\sigma}=0$. This discussion shows that we have the following
cases using equations (\ref{ew3}) and (\ref{ew6}):

\begin{case}
$h=0$ and $\dot{\sigma}=0$: then $kf^{\prime }=f$ and $k^{\prime }\sigma
=\sigma $ and so $k=x+a\neq 0$ and%
\begin{equation*}
\frac{f^{\prime }}{f}=\frac{1}{x+a}
\end{equation*}%
Therefore, $f=r\left( x+a\right) $ where both $r$ and $\left( x+a\right) $
are positive.
\end{case}

\begin{case}
$f^{\prime }=0$ and $k=0$: then $\dot{h}f=f$ and $h\dot{\sigma}=\sigma $ and
so $h=t+a\neq 0$ and similarly where both $r$ and $\left( t+a\right) $ are
positive.
\end{case}

\begin{case}
$f^{\prime }=0$ and $\dot{\sigma}=0$: then $\dot{h}f=f$ and $k^{\prime
}\sigma =\sigma $ and so $h=t+a$ and $k=x+b$.
\end{case}

The following table summarizes the above three cases of concurrent vector
fields on the $2-$dimensional doubly warped spacetime.

\begin{center}
\begin{tabular}{|c|c|c|c|c|}
\hline
\multicolumn{2}{|c|}{Case} & $\zeta $ & $\sigma $ & $f$ \\ \hline
$h=0$ & $\dot{\sigma}=0$ & $\zeta =\left( x+a\right) \partial _{x}$ & 
constant & $r\left( x+a\right) $ \\ \hline
$k=0$ & $f^{\prime }=0$ & $\zeta =\left( t+a\right) \partial _{t}$ & $%
r\left( t+a\right) $ & constant \\ \hline
$f^{\prime }=0$ & $\dot{\sigma}=0$ & $\zeta =\left( t+a\right) \partial
_{t}+\left( x+a\right) \partial _{x}$ & constant & constant \\ \hline
\end{tabular}
\end{center}

\end{document}